\newtheorem{thm}{Theorem}[section]
\newtheorem{prob}{Open problem}
\newtheorem{lemma}{Lemma}[section]
\newtheorem{remark}{Remark}[section]
\begin{document}
\title{Periodic Beurling-Ahlfors Extensions and Quasisymmetric Rigidity of Carpets\thanks{Supported by NSFC (No. 1250010257).}}

\author{
Fan WEN \\ \\ Department of Mathematics
\\Jinan University
\\ Guangzhou 510632, China\\
wenfan@jnu.edu.cn}

\maketitle

\begin{abstract}
We establish periodic quasiconformal extension theorems for periodic orientation-preserving quasisymmetric self homeomorphisms of quasicircles or  quasi-round carpets. As applications, we prove that, if $f$ is a periodic orientation-preserving quasisymmetric self homeomorphism of a quasi-round carpet $S$ of measure zero in $\mathbb{C}$, which has a fixed point in the outer peripheral circle of $S$, then $f$ is the identity on $S$. Moreover, we prove that, if $f$ is a quasisymmetric self homeomorphism of a square carpet $S$ of measure zero in a rectangle ring, which fixes each of the four vertices of the outer peripheral circle of $S$, then $f$ is the identity on $S$. An analogous rigidity problem for the $\mathbb{C}^*$-square carpets is discussed.

\medskip

\noindent{\bf Keywords}\,
Quasi-round carpet, Quasisymmetric map, Quasiconformal extension, Quasisymmetric rigidity

\medskip

\noindent{\bf 2020 MSC}  30L10
\end{abstract}

\section{Introduction}

M. Bonk, B. Kleiner, and S. Merenkov established a nice quasiconformal geometry of carpets, by showing that quasisymmetric maps between carpets behave like conformal maps between domains in the complex plane $\mathbb{C}$; see \cite{B,B1,BKM,BM1,BM2}. The present paper addresses periodic quasiconformal extension and rigidity problem in this geometry and gives several key results and an open question.

We start with notation and basic facts that will be used later. Let $f:X\to Y$ be a topological homeomorphism of metric spaces $X$ and $Y$. We say that $f$ is quasisymmetric if there exists a homeomorphism $\eta:[0,\infty)\to[0,\infty)$ such that
$$d_Y(f(x), f(a))\leq\eta(t)d_Y(f(x),f(b))$$
for all triples of points $x, a, b$ in $X$ and $t\geq 0$ with $d_X(x,a)\leq td_X(x,b)$. In this case, we also say that $f$ is $\eta$-quasisymmetric. We say that $f$ is weakly quasisymmetric if there exists a constant $H\geq1$ such that
$$d_Y(f(x), f(a))\leq Hd_Y(f(x),f(b))$$
for all triples of points $x, a, b$ in $X$ with $d_X(x,a)\leq d_X(x,b)$; see Tukia-V\"{a}is\"{a}l\"{a} \cite{TV} or Heinonen \cite[Page 78]{H}. We say that $f$ is quasiconformal if
$$\sup_{x\in X}\limsup_{r\to 0}\frac{\sup_{a\in S(x,r)}d_Y(f(a), f(x))}{\inf_{b\in S(x,r)} d_Y(f(b),f(x))}<\infty,
$$
where $S(x,r)=\{a\in X: d_X(x,a)=r\}$.

It is known  that every $\eta$-quasisymmetric homeomorphism of $X$ and $Y$ can be extended to an $\eta$-quasisymmetric homeomorphism of their completions.

Quasisymmetry implies weak quasisymmetry. The inverse is true in some cases but not generally; see \cite[10.19]{H} and \cite[Theorem 6.6]{V1}. In particular,
every weakly quasisymmetric embedding of a connected subset of  $\mathbb{R}^n$ into $\mathbb{R}^m$ is quasisymmetric.

Quasisymmetry implies quasiconformality. The inverse is true in some cases but not generally; see \cite{Ty} and \cite[11.19]{H}. In particular,  every quasiconformal homeomorphism between two (closed) quasi-disks in $\mathbb{C}$ is quasisymmetric. Note also that a homeomorphism between domains $D$ and $G$  of $R^n$  is quasiconformal if and only if it is $\eta$-quasisymmetric in $B(x,d(x,\partial D)/2)$ for all $x\in D$, where $\eta$ is independent of $x$; see \cite[11.14]{H}. For the properties of quasiconformal maps we refer to V\"{a}is\"{a}l\"{a} \cite{V}.

For a subset $E$ of $\mathbb{C}$ we denote by $\overline{E}$, $\partial E$, $\mbox{int}(E)$, and $\mbox{diam}(E)$ its closure, boundary, interior, and diameter, respectively. Denote by $d(E,F)$ the distance between two subsets $E$ and $F$ of $\mathbb{C}$.
By a Jordan arc in $\mathbb{C}$, we mean a topological embedding of the unit interval into $\mathbb{C}$ or its image. By a Jordan closed path in $\mathbb{C}$, we mean a topological embedding of the unit circle into $\mathbb{C}$ or its image.
By a Jordan domain in $\mathbb{C}$, we mean a simply connected open subset of $\mathbb{C}$ whose boundary is a Jordan closed path. A Jordan closed path $\gamma$ in $\mathbb{C}$ is called a $c$-quasicircle for some constant $c\geq1$ if
$$\min\{\mbox{diam}(\alpha), \mbox{diam}(\beta)\} \leq c|z-w|$$
for every pair $z,w$ of distinct points in $\gamma$, where $\alpha$, $\beta$ are two distinct proper subarcs of $\gamma$ of endpoints $z$ and $w$. By Ahlfors \cite{A}, a Jordan closed path $\gamma$ in $\mathbb{C}$ is a quasicircle if and only if there is a quasiconformal map $f:\mathbb{C}\to \mathbb{C}$ such that $f(\gamma)$ is the unit circle. By a $c$-quasi-disk in $\mathbb{C}$, we mean a Jordan domain whose boundary is a $c$-quasicircle. Thus, a Jordan domain $D$ in $\mathbb{C}$ is a quasi-disk if and only if there is a quasiconformal map $f:\mathbb{C}\to \mathbb{C}$ such that $f(D)$ is the unit disk.

A subset of $\mathbb{C}$ is called a carpet if it is topologically homeomorphic to the standard $1/3$-Sierpi\'{n}ski carpet. According to Whyburn \cite{W}, a subset of $\mathbb{C}$ is a carpet if and only if it is compact, connected, locally connected, of topological dimension $1$, and has no local cut points. Moreover, every carpet $S$ in $\mathbb{C}$ can be written as
\begin{equation}\label{car}
S=D_0\setminus\bigcup_{k=1}^\infty D_k,
\end{equation}
where $D_0$ is a closed Jordan domain and $D_k$, $k\geq 1$, are Jordan domains in $\mathbb{C}$ satisfying the following conditions:

1) $\overline{D}_k\subset \mbox{int}(D_0)$ for each  $k\geq 1$,

2) $d(D_k,D_j)>0$ for each pair $k,j\geq 1$,

3) $\mbox{diam}(D_k)\to 0$ as $k\to \infty$, and

4) $\mbox{int}(S)=\emptyset$.

A Jordan closed path $\gamma$ in a carpet $S$ is called a peripheral circle of $S$ if $S\setminus \gamma$ is connected. Thus, for a carpet $S$ as in (\ref{car}), its peripheral circles are $\partial D_k$, $k\geq 0$, in which $\partial D_0$ is called its outer peripheral circle. By the definition, if $h: S\to T$ is a homeomorphism between two carpets $S$ and $T$ in $\mathbb{C}$, then $h$ maps the peripheral circles of $S$ into those of $T$.

For a carpet $S$ in $\mathbb{C}$, the bounded components of $\mathbb{C}\setminus S$ are simply connected in $\mathbb{C}$; however, the unbounded component is not.
Thus, if a homeomorphism $h: S\to T$ between two carpets $S$ and $T$ in $\mathbb{C}$ has a homeomorphic extension $H: \mathbb{C}\to \mathbb{C}$,  $h$ must preserve the outer peripheral circles.

By a quasi-round carpet, we mean a carpet in $\mathbb{C}$ whose peripheral circles are all $c$-quasicircles for some common constant $c\geq 1$.

According to Beurling-Ahlfors's extension theorem \cite{BA}, every quasisymmetric self homeomorphism of the unit circle has a quasiconformal extension to $\mathbb{C}$. As a consequence, if $\gamma_1$ and $\gamma_2$ are quasicircles in $\mathbb{C}$, then every quasisymmetric homeomorphism of $\gamma_1$ to $\gamma_2$ has a quasiconformal extension to $\mathbb{C}$.

We establish a periodic version of Beurling-Ahlfors's extension theorem, which does not follow directly from Beurling-Ahlfors's argument without any additional ingredient.

\begin{thm}\label{l3}
Let $\gamma$ be a quasicircle in $\mathbb{C}$. Then every $k$-periodic orientation-preserving quasisymmetric homeomorphism $f:\gamma\to \gamma$ has a $k$-periodic quasiconformal extension $F:\mathbb{C}\to \mathbb{C}$.
\end{thm}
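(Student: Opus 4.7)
My plan is to reduce to $\gamma=S^1$ via a quasiconformal straightening, quasisymmetrically conjugate $f$ on $S^1$ to a rigid rotation by an averaging construction, extend the conjugacy to $\mathbb{C}$ by Beurling--Ahlfors, and conjugate the rotation back. For the reduction, I would use Ahlfors's characterization cited in the introduction to pick a quasiconformal self-homeomorphism $\psi:\mathbb{C}\to\mathbb{C}$ with $\psi(\gamma)=S^1$; since $\psi$ restricted to the closed quasi-disks bounded by $\gamma$ and by $S^1$ is quasisymmetric, the conjugate $\tilde f:=\psi\circ f\circ\psi^{-1}$ is a $k$-periodic orientation-preserving quasisymmetric self-homeomorphism of $S^1$, and a periodic quasiconformal extension of $\tilde f$ pulls back via $\psi^{-1}$ to one of $f$. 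So assume $\gamma=S^1$. If $f=\mathrm{id}$ take $F=\mathrm{id}$; otherwise let $m\ge 2$ be the exact order of $f$.

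Now I would construct the conjugacy to a rotation. A standard lifting argument (a strictly increasing lift $\tilde f:\mathbb{R}\to\mathbb{R}$ fixing $0$ with $\tilde f^m=\mathrm{id}$ must itself be the identity) shows $f$ has no fixed point, so its rotation number has the form $p/m$ with $\gcd(p,m)=1$. Taking the lift $\tilde f$ with $\tilde f^m=\mathrm{id}+p$, set
\[
\tilde h(x):=\frac1m\sum_{j=0}^{m-1}\Bigl(\tilde f^j(x)-\frac{jp}{m}\Bigr).
\]
A direct computation yields $\tilde h(x+1)=\tilde h(x)+1$ and $\tilde h(\tilde f(x))=\tilde h(x)+p/m$; each $\tilde f^j-(jp/m)\cdot\mathrm{id}$ is strictly increasing, so $\tilde h$ is a homeomorphism of $\mathbb{R}$ and descends to a self-homeomorphism $h:S^1\to S^1$ intertwining $f$ with the rigid rotation $R$ by $2\pi p/m$.

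The main obstacle is verifying that $h$ is quasisymmetric, since naive sums of quasisymmetric maps need not be quasisymmetric. Here the common monotonicity of the summands saves us: if $f$ is $\eta$-quasisymmetric then each iterate $\tilde f^j$ is $\eta_j$-quasisymmetric with $\eta_j:=\eta\circ\cdots\circ\eta$ ($j$ factors), and $\tilde f^j(x)-\tilde f^j(y)$ always has the sign of $x-y$. Consequently, for $x,a,b\in\mathbb{R}$ with $|x-a|\le t|x-b|$ the absolute values distribute through the sum defining $\tilde h$, giving
\[
|\tilde h(x)-\tilde h(a)|=\frac1m\sum_{j<m}|\tilde f^j(x)-\tilde f^j(a)|\le\eta^*(t)\cdot\frac1m\sum_{j<m}|\tilde f^j(x)-\tilde f^j(b)|=\eta^*(t)\,|\tilde h(x)-\tilde h(b)|,
\]
with $\eta^*(t):=\max_{j<m}\eta_j(t)$. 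Hence $h$ is $\eta^*$-quasisymmetric on $S^1$.

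Finally, by the classical Beurling--Ahlfors theorem cited in the introduction, $h$ extends to a quasiconformal self-homeomorphism $H$ of $\mathbb{C}$. Let $\hat R$ denote the rigid rotation of $\mathbb{C}$ extending $R$, and set $F:=H^{-1}\circ\hat R\circ H$. Then $F$ is quasiconformal, $F|_{S^1}=h^{-1}\circ R\circ h=f$, and $F^k=H^{-1}\hat R^k H=\mathrm{id}$ because $m\mid k$. Conjugating $F$ by $\psi^{-1}$ produces the required $k$-periodic quasiconformal extension of $f$ on the original quasicircle $\gamma$.
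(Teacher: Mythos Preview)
Your proof is correct and takes a genuinely different route from the paper. The paper first builds a $k$-periodic quasisymmetric extension of $f$ to an annulus $A_r=\{r\le|z|\le1\}$ by cutting $A_r$ into $k$ sectors along radii through the orbit $u_j=f^j(u_0)$, manufacturing boundary homeomorphisms $h_j:C_j\to C_{j+1}$ from $f$ and auxiliary bilipschitz identifications, verifying their quasisymmetry via a four-point gluing criterion (Lemma~\ref{l2.3}), extending each $h_j$ across its sector by Beurling--Ahlfors, and choosing the last extension as the inverse composite of the others to force periodicity; it then propagates the annular extension to all of $\mathbb{C}$ by iterated Schwarz reflection. Your argument instead linearises $f$ globally: the averaged lift $\tilde h$ conjugates $f$ to a rigid rotation, the common monotonicity of the summands lets you push absolute values through the sum to get quasisymmetry of $\tilde h$, and a single Beurling--Ahlfors extension of the conjugacy $h$ finishes the job. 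Your approach is shorter, avoids the delicate piecewise quasisymmetry checks, and yields the stronger conclusion that the extension is quasiconformally conjugate to a Euclidean rotation. The paper's approach, on the other hand, is more self-contained (no rotation-number theory) and produces intermediate lemmas---the annulus extension (Lemma~\ref{l2.5}) and the reflection step (Lemma~\ref{l2.4})---that feed directly into the proof of Theorem~\ref{l2} for carpets.
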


Here $f$ is orientation-preserving, meaning that the images of any three points in clockwise order are also in clockwise order. A self map $f$ of a set is called periodic, if $f^k=id$ for some integer $k\geq 1$, where $f^k$ denotes the $k$-th iterate of $f$ and $id$ denotes the identity map. Notice that the homeomorphisms in Theorem \ref{l3} are a uncountable class.

Let $f:S\to T$ be a quasisymmetric homeomorphism between two quasi-round carpets $S$ and $T$ in $\mathbb{C}$. According to Bonk \cite[Proposition 5.1]{B}, $f$ has a quasiconformal extension $F:\widehat{\mathbb{C}}\to \widehat{\mathbb{C}}$. Moreover, $f$ has a quasiconformal extension $F:\mathbb{C}\to \mathbb{C}$ if and only if it preserves the outer peripheral circles of  $S$ and $T$.
We establish a periodic version of this extension result.

\begin{thm}\label{l2}
Let $S$ be a quasi-round carpet of measure zero in $\mathbb{C}$. Then every $k$-periodic orientation-preserving quasisymmetric homeomorphism $f:S\to S$ that preserves the outer peripheral circle of $S$ has a $k$-periodic quasiconformal extension $F:\mathbb{C}\to \mathbb{C}$.
\end{thm}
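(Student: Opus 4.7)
The plan is to construct $F$ piecewise: take $F|_S:=f$, extend on the outer region by applying Theorem~\ref{l3}, and handle the countably many inner Jordan domains one $f$-orbit at a time so that the iterates close up after $k$ steps. Write $S=\overline{D_0}\setminus\bigcup_{j\ge 1}D_j$ with each $\partial D_j$ a $c$-quasicircle. Since $f$ preserves $\partial D_0$, the restriction $f|_{\partial D_0}$ is a $k$-periodic orientation-preserving quasisymmetric self-map of the quasicircle $\partial D_0$, so Theorem~\ref{l3} furnishes a $k$-periodic quasiconformal map $F_0:\mathbb{C}\to\mathbb{C}$; set $F:=F_0$ on $\mathbb{C}\setminus\mathrm{int}(D_0)$.

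The core step is the inner extension. Because $f^k=\mathrm{id}$, the permutation of $\{D_j\}_{j\ge 1}$ induced by $f$ decomposes into cycles whose lengths $m$ divide $k$. Fix a cycle $D_{j_0}\to D_{j_1}\to\cdots\to D_{j_{m-1}}\to D_{j_0}$. For $i=0,\dots,m-2$ pick any quasiconformal extension $H_i:\overline{D_{j_i}}\to\overline{D_{j_{i+1}}}$ of $f|_{\partial D_{j_i}}$; such an $H_i$ exists with dilatation controlled by $c$ and the quasisymmetry function $\eta$ of $f$ via the Beurling--Ahlfors extension between quasi-disks. To close the cycle coherently with periodicity, apply Theorem~\ref{l3} to $f^m|_{\partial D_{j_0}}$, which is $(k/m)$-periodic, orientation-preserving, and quasisymmetric on the quasicircle $\partial D_{j_0}$, to obtain a $(k/m)$-periodic quasiconformal map $G:\mathbb{C}\to\mathbb{C}$, and then define
\[H_{m-1}:=G\circ(H_{m-2}\circ\cdots\circ H_0)^{-1}\quad\text{on }\overline{D_{j_{m-1}}}.\]
A direct boundary computation using $f^k=\mathrm{id}$ shows $H_{m-1}|_{\partial D_{j_{m-1}}}=f|_{\partial D_{j_{m-1}}}$, and by construction $H_{m-1}\circ H_{m-2}\circ\cdots\circ H_0=G|_{\overline{D_{j_0}}}$ is $(k/m)$-periodic. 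Declare $F|_{\overline{D_{j_i}}}:=H_i$ for each $i$ and repeat for every cycle.

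The resulting map $F:\mathbb{C}\to\mathbb{C}$ is a homeomorphism which agrees with $f$ on $S$ and is $K$-quasiconformal on the open set $\mathbb{C}\setminus S$ for a single $K=K(c,\eta,k)$: every cycle has $m\le k$, so the at-most-$k$ factors in the composition defining $H_{m-1}$ yield a uniform dilatation bound across all orbits. Since $S$ has planar measure zero, the classical removability theorem for quasiconformal mappings upgrades $F$ to a globally $K$-quasiconformal self-map of $\mathbb{C}$. Finally $F^k=\mathrm{id}$ holds on $S$ (because $f^k=\mathrm{id}$), on $\mathbb{C}\setminus\mathrm{int}(D_0)$ (because $F_0^k=\mathrm{id}$), and on $\overline{D_{j_0}}$ (because $F^m|_{\overline{D_{j_0}}}=G|_{\overline{D_{j_0}}}$ is $(k/m)$-periodic); the remaining domains in each cycle inherit periodicity through $F$-conjugation with $\overline{D_{j_0}}$.

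The principal obstacle is the coordinated cyclic construction: the $m$ quasiconformal extensions in a cycle must compose to a predetermined periodic map on $\overline{D_{j_0}}$, while maintaining a single dilatation bound valid over the infinitely many orbits. This is solved by exploiting the freedom of Beurling--Ahlfors on $m-1$ of the domains and absorbing the closure constraint into the last one in closed form; the bound $m\le k$ is what keeps the compounded dilatation uniform. The measure-zero hypothesis on $S$ enters only in the final removability step that glues the quasiconformal pieces into one globally quasiconformal map.
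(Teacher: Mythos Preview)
Your proof is correct and follows essentially the same strategy as the paper's: decompose the inner peripheral circles into $f$-orbits of length $m\mid k$, use arbitrary quasiconformal extensions on the first $m-1$ steps of each cycle, and absorb the periodicity constraint into the last step via Theorem~\ref{l3} applied to $f^m$, with the measure-zero hypothesis invoked only for removability. The minor cosmetic differences are that the paper sources the intermediate extensions $H_0,\dots,H_{m-2}$ from a single global quasiconformal extension $\widetilde f:D_0\to D_0$ supplied by Bonk's theorem (rather than individual Beurling--Ahlfors extensions), and handles the exterior of $D_0$ at the end by conjugating to the disk and reflecting (rather than invoking Theorem~\ref{l3} on $\partial D_0$ up front), but the underlying mechanism is the same.
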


As an application of Theorem \ref{l2}, we have the following rigidity result.

\begin{thm}\label{c1} Let $S$ be a quasi-round carpet of measure zero in $\mathbb{C}$. If $f:S\to S$ is a periodic orientation-preserving quasisymmetric homeomorphism that has a fixed point in the outer peripheral circle of $S$, then $f=id$ on $S$.
\end{thm}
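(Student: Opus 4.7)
The plan has three steps: extend $f$ to a periodic quasiconformal homeomorphism of $\mathbb{C}$, prove the extension is the identity on the outer peripheral circle, and then globalize using a rigidity theorem for finite-order quasiconformal maps.

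First, I note that $f$ preserves $\partial D_0$ setwise: $f$ permutes the peripheral circles of $S$, and the fixed point $p\in\partial D_0$ lies in exactly one such circle (peripheral circles being pairwise disjoint), forcing $f(\partial D_0)=\partial D_0$. Theorem \ref{l2} then yields a $k$-periodic quasiconformal extension $F:\mathbb{C}\to\mathbb{C}$ with $F|_S=f$. Second, I show $f|_{\partial D_0}=\mathrm{id}$: the restriction is a $k$-periodic orientation-preserving self-homeomorphism of the Jordan curve $\partial D_0$ fixing $p$; cutting $\partial D_0$ at $p$ identifies it with a strictly increasing self-homeomorphism of $[0,1]$ fixing both endpoints, and if $f(x)>x$ for some interior $x$ then $x<f(x)<\cdots<f^k(x)=x$, a contradiction, and symmetrically $f(x)<x$ is impossible. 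Hence $F$ is the identity on all of $\partial D_0$.

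For the final step, extend $F$ to $\widehat{\mathbb{C}}$ by $F(\infty)=\infty$, so that $\langle F\rangle$ becomes a finite cyclic group of orientation-preserving quasiconformal self-homeomorphisms of $\widehat{\mathbb{C}}$. By Tukia's theorem on finite quasiconformal groups, there exists a quasiconformal homeomorphism $\psi$ of $\widehat{\mathbb{C}}$ such that $\phi:=\psi F\psi^{-1}$ is an orientation-preserving M\"obius transformation of finite order. After an additional M\"obius conjugation, $\phi(z)=\zeta z$ for some root of unity $\zeta$. If $\zeta\neq 1$ then $\phi$ has exactly the two fixed points $0$ and $\infty$ on $\widehat{\mathbb{C}}$, so $F$ fixes only two points, contradicting that $F$ fixes the uncountable set $\partial D_0$ pointwise. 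Hence $\zeta=1$, $F=\mathrm{id}$, and in particular $f=\mathrm{id}$ on $S$.

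The main obstacle is this last step: a quasiconformal self-homeomorphism of $\overline{D_0}$ that restricts to the identity on $\partial D_0$ need not be trivial in general, so the finite-order hypothesis must be exploited globally. Tukia's linearization is the natural tool in the quasiconformal framework; a purely topological alternative is Ker\'ekj\'art\'o's theorem that every periodic orientation-preserving self-homeomorphism of a closed topological disk is conjugate to a rotation, from which the conclusion is immediate because no nontrivial rotation can fix a whole boundary circle.
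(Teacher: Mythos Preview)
Your proof is correct, and the first two steps (preservation of $\partial D_0$, application of Theorem~\ref{l2}, and identity on $\partial D_0$ via monotonicity) match the paper exactly---the paper packages the boundary argument as Lemma~\ref{2.3}, proved in essentially the same way.

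The divergence is in the globalization step. The paper proves a purely topological rigidity lemma (Lemma~\ref{l1}) by hand: given that the periodic homeomorphism $F$ is already the identity on $\partial D_0$, it takes an arbitrary Jordan arc $\gamma\subset\operatorname{int}(D_0)$ joining two boundary points, forms the intersection $D\cap F(D)\cap\cdots\cap F^{k-1}(D)$ of the Jordan domains bounded by $\gamma$ and a fixed boundary subarc, invokes Ker\'ekj\'art\'o's result that components of intersections of Jordan domains are Jordan domains, and shows the boundary arc of the relevant component is $F$-invariant and hence equal to $\gamma$. This argument is elementary and self-contained, needing only point-set topology of the plane.

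You instead invoke Tukia's linearization of finite quasiconformal groups on $\widehat{\mathbb{C}}$ and then count fixed points of the resulting elliptic M\"obius transformation. This is perfectly valid---the extension $F(\infty)=\infty$ is legitimate since quasiconformal self-maps of $\mathbb{C}$ extend to $\widehat{\mathbb{C}}$---and it is shorter once Tukia's theorem is granted, but it imports substantially heavier machinery than the problem requires. Your closing remark about Ker\'ekj\'art\'o's conjugacy-to-rotation theorem for periodic disk homeomorphisms is in fact much closer in spirit to the paper: Lemma~\ref{l1} can be read as a direct, special-case proof of that classical result, avoiding the full conjugacy statement. The trade-off is that the paper's route keeps the argument within elementary plane topology, while yours leverages the global quasiconformal structure that the extension theorem already provides.
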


The peripheral circles of a carpet $S$ are called uniformly relatively separated if there is a constant $\delta>0$ such that $$\frac{d(C_1,C_2)}{\min\{\mbox{diam}(C_1), \mbox{diam}(C_2)\}}\geq \delta$$
for all pairs $C_1, C_2$ of peripheral circles of $S$ with $C_1\neq C_2$. Bonk-Merenkov \cite[Corollary 4.5]{BM1} proved that, if $S$ is a quasi-round carpet of measure zero in $\widehat{\mathbb{C}}$ whose peripheral circles are uniformly relatively separated, $C$ is a peripheral circle of $S$, and $p,q$ are two different points in $C$, then the group of all orientation-preserving quasisymmetric self homeomorphisms of $S$ fixing $p$ and $q$ is $\{id\}$ or an infinite cyclic group.
By contrast, Theorem \ref{c1} shows that the additional periodicity condition yields a stronger rigidity. For some recent works on quasisymmetric rigidity of metric carpets and self-similar quasi-round carpets we refer to \cite{R,S,Z}.

%As is well known, if $R=[0,a]\times [0,1]$ is a closed rectangle and $f:R\to R$ is a conformal homeomorphism fixing each of the four vertices of $R$ %then $f$ is the identity on $R$.

A carpet $S$ in a rectangle $R$ is called a square carpet if it can be written as
$$
S=R\setminus\bigcup_{k=1}^\infty Q_k,
$$
where $Q_k$, $k\geq 1$, are open squares of sides parallel to the coordinative axes. Let
$S$ and $T$ be respectively square carpets of measure zero in rectangles $[0,a]\times[0,1]$ and $[0,b]\times[0,1]$.
Bonk and Merenkov \cite[Theorem 1.4]{BM1} proved that, if $f:S\to T$ is an orientation-preserving quasisymmetric homeomorphism sending the four vertices of $[0,a]\times[0,1]$ to those of $[0,b]\times[0,1]$ and fixing the origin, then $a=b$,
$S=T$, and $f=id$ on $S$.

We say that $R\setminus  K$ is a rectangle ring if
$$
R=[0,a]\times [0,1]\ \mbox{ and }\ K=(s,s+w)\times (t,t+h)\ \mbox{ with }\ \overline{K}\subset\mbox{int}(R).
$$
A carpet $S$ in $R\setminus K$ is called a square carpet if it can be written as
$$
S=(R\setminus K)\setminus\bigcup_{k=1}^\infty Q_k,
$$ where $Q_k$, $k\geq 1$, are open squares as above. By the same argument as that of \cite[Theorem 1.4]{BM1}, we have the following lemma.

\begin{lemma}\label{1st lemma}
Let $S$ be a square carpet of measure zero in a rectangle ring $R\setminus K$. If $f: S\to S$ is a quasisymmetric homeomorphism fixing each of the four vertices of $R$ and satisfying $f(\partial K)=\partial K$ then $f=id$ on $S$.
\end{lemma}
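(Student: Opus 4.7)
The approach is to adapt the proof of \cite[Theorem 1.4]{BM1} to the ring setting. First I would observe that $f|_{\partial R}$ is automatically orientation-preserving (an orientation-reversing self-homeomorphism of a Jordan closed path cannot fix four cyclically ordered points), so $f$ preserves the outer peripheral circle of $S$. By Bonk's extension theorem \cite[Proposition 5.1]{B}, $f$ extends to a quasiconformal self-homeomorphism $F$ of $\widehat{\mathbb{C}}$; preservation of $\partial R$ lets us arrange $F(\infty)=\infty$, so $F$ is a quasiconformal self-homeomorphism of $\mathbb{C}$. By the hypotheses, $F$ fixes the four vertices of $R$, satisfies $F(R)=R$ and $F(K)=K$, and therefore restricts to a quasiconformal self-homeomorphism of the closed ring $\overline{R\setminus K}$.

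The second step is the carpet-modulus argument of \cite{BM1}. Because $S$ has Lebesgue measure zero, the classical $2$-modulus of a family of curves in $R\setminus K$ avoiding $S$ equals the corresponding carpet modulus. Following the analysis in the proof of \cite[Theorem 1.4]{BM1}, I would study the horizontal crossing family $\Gamma_h$ of curves in $R\setminus K$ connecting the left edge $\{0\}\times[0,1]$ to the right edge $\{a\}\times[0,1]$ and the analogous vertical family $\Gamma_v$. In the ring setting each of $\Gamma_h,\Gamma_v$ splits naturally into an ``above-$K$'' and ``below-$K$'' (respectively ``left-of-$K$'' and ``right-of-$K$'') subfamily, on each of which the extremal carpet weight is essentially a normalized indicator function of the corresponding strip. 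Quasiconformal invariance of modulus together with the uniqueness of extremal weights forces the pushforward of this weight under $F$ to equal itself, which implies that $F$ carries almost every horizontal (respectively vertical) peripheral edge of $S$ onto a horizontal (respectively vertical) line and preserves the horizontal (respectively vertical) coordinate. Combined with the fixing of the four vertices of $R$, this yields $F=id$ on $S$.

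The main obstacle is the extra peripheral component $\partial K$, which causes the crossing families to split in two, and requires one to argue that the extremal weights on the two subfamilies are respected individually by $F$. A convenient way to sidestep this is to first deduce from the split modulus calculation that $F$ fixes each of the four vertices of $K$ and maps each side of $\partial K$ into itself; the ring then decomposes into eight subrectangles arranged in a $3\times 3$ pattern around $K$, on each of which $F$ fixes the four corners and $S$ restricts to a square carpet of measure zero. Applying \cite[Theorem 1.4]{BM1} to each piece gives $F=id$ there, hence $F=id$ on $S$. The remaining bookkeeping is identical to that in the proof of \cite[Theorem 1.4]{BM1}.
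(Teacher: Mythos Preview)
Your overall approach---extend $f$ quasiconformally via \cite[Proposition~5.1]{B} and then run the carpet-modulus argument of \cite[Theorem~1.4]{BM1}---is exactly what the paper intends; indeed the paper gives no proof beyond the remark ``by the same argument as that of \cite[Theorem~1.4]{BM1}''. The point you are missing is that no splitting of the crossing families is needed. Since $F(K)=K$ by hypothesis, one simply treats $K$ as one more hole of the carpet in the full rectangle $R$ and runs the argument of \cite{BM1} verbatim on the family of full vertical segments $\{\gamma_x:0\le x\le a\}$ (and likewise the horizontal family). The admissible weight is taken to be $1$ on $K$, because the relevant ratio is $l(F(K))/l(K)=h/h=1$ in the vertical direction (and $w/w=1$ in the horizontal direction); since $F$ permutes $\{Q_k:k\ge1\}$ one gets $\int_R\rho^2=a$, so $\rho$ is extremal and hence equal to $1$ a.e., forcing $l(F(Q_k))=l(Q_k)$ for every $k$. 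The remainder of the argument in \cite{BM1} then gives $F(Q_k)=Q_k$ and $F=\mathrm{id}$ on each $\partial Q_k$, hence on $S$.

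Your proposed sidestep via the $3\times3$ decomposition, on the other hand, has a genuine gap: nothing prevents some square $Q_k$ from straddling one of the grid lines $x=s$, $x=s+w$, $y=t$, $y=t+h$, and in that case $S$ restricted to a subrectangle is not a square carpet (the truncated hole is a non-square rectangle, and its boundary is not even a peripheral circle of the restriction), so \cite[Theorem~1.4]{BM1} does not apply to the pieces. Fortunately, as noted above, this detour is unnecessary once you allow the crossing curves to pass through $K$.
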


We get the same result without assuming $f(\partial K)=\partial K$.

\begin{thm}\label{t1}
Let $S$ be a square carpet of measure zero in a rectangle ring $R\setminus K$ and $f: S\to S$ be a quasisymmetric homeomorphism fixing each of the four vertices of $R$. Then $f=id$ on $S$.
\end{thm}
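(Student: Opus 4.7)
The plan is to reduce to Lemma~\ref{1st lemma} by passing to a suitable iterate of $f$, and then invoke Theorem~\ref{c1} to conclude.

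First, I would identify which peripheral circles of $S$ are preserved by $f$. The peripheral circles are $\partial R$, $\partial K$, and the $\partial Q_k$, and they are pairwise disjoint: $\overline{K}\subset\mathrm{int}(R)$, and the conditions defining a carpet force the bounded complementary components to satisfy $d(D_j,D_k)>0$ for distinct indices. In particular, the four vertices of $R$ lie only on $\partial R$. Since any homeomorphism between carpets permutes peripheral circles and $f$ fixes these vertices, $f(\partial R)=\partial R$. Fixing four points in cyclic order on the Jordan curve $\partial R$ forces $f|_{\partial R}$ to preserve cyclic order, so $f$ is orientation-preserving. Bonk's extension theorem \cite[Proposition~5.1]{B} then yields a quasiconformal extension $F:\mathbb{C}\to\mathbb{C}$ of $f$ with $F(R)=R$, and $F$ permutes the bounded components $K,Q_1,Q_2,\ldots$ of $\mathbb{C}\setminus S$.

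The heart of the proof is to show that the $f$-orbit $\mathcal{O}=\{f^n(\partial K):n\ge 0\}$ is finite. Since $\mathrm{diam}(Q_k)\to 0$, for each $\varepsilon>0$ only finitely many peripheral circles of $S$ have diameter at least $\varepsilon$, so it suffices to produce a uniform lower bound $\mathrm{diam}(f^n(\partial K))\ge\delta>0$. I would attempt to obtain this bound by a quasisymmetric distortion estimate normalized by the four fixed vertices of $R$, which control positions and relative scales of sets under $f$. \textbf{Producing this uniform lower bound is the main obstacle}: a crude iteration of the distortion function $\eta$ of $f$ degrades with $n$, so one must argue more carefully, presumably by exploiting the quasiconformal extension $F$ together with an intrinsic carpet invariant preserved by $f$ (for instance a combinatorial modulus in the sense of \cite{B}, or the modulus of suitable curve families in the annuli $R\setminus\overline{D}$). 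Granting the finiteness of $\mathcal{O}$, there is an integer $m\ge 1$ with $f^m(\partial K)=\partial K$; since $f^m$ still fixes the four vertices of $R$, Lemma~\ref{1st lemma} applied to $f^m$ yields $f^m=\mathrm{id}$ on $S$, so $f$ is periodic.

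Finally, the carpet $S$ is quasi-round because all its peripheral circles are axis-aligned rectangles or squares, which are uniform quasicircles, and it has measure zero by hypothesis. The map $f$ is now periodic, orientation-preserving, quasisymmetric, and admits fixed points on the outer peripheral circle $\partial R$ (any vertex of $R$ will do). Theorem~\ref{c1} then applies to give $f=\mathrm{id}$ on $S$.
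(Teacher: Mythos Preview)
Your overall architecture matches the paper exactly: extend $f$ quasiconformally to $\mathbb{C}$ via Bonk's result, show that some iterate $f^n$ fixes $\partial K$, invoke Lemma~\ref{1st lemma} to get $f^n=\mathrm{id}$, and finish with Theorem~\ref{c1}. You have also correctly located the only nontrivial step.

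However, the step you flag as ``the main obstacle'' is the entire substance of the theorem, and you have not supplied it. The paper's argument does \emph{not} use quasisymmetric distortion estimates or combinatorial modulus in the sense of \cite{B}; it uses the classical conformal modulus of the family $\Gamma_A$ of vertical (resp.\ horizontal) segments in $R$, together with the explicit admissible density
\[
\rho(z)=\frac{l(F(Q))}{l(Q)}\quad\text{for }z\in Q\in\mathbf{H},\qquad \rho=0\text{ elsewhere},
\]
modified on $K$ and on $Q_q=F^{-1}(K)$ to compensate for $K$ not being a square (here $l(Q)$ is the vertical side-length). Admissibility uses that $F$ is absolutely continuous on a.e.\ vertical segment and that a.e.\ such segment meets $S$ in a null set; the computation of $\int_R\rho^2\,dxdy$ exploits that the $Q_k$ are \emph{squares} whose total area equals $|R|-|K|$. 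Comparing with $\mathrm{mod}(\Gamma_A)=a$ yields, when $w>h$,
\[
\Bigl(\frac{w}{h}-1\Bigr)h^2\le\Bigl(\frac{w}{h}-1\Bigr)\,l(F(K))^2,
\]
hence $h\le l(F(K))$; replacing $F$ by $F^n$ gives the uniform bound $l(F^n(K))\ge\min\{w,h\}$ you need. The square-carpet hypothesis enters precisely in this modulus computation, not merely through quasi-roundness, so a generic distortion or combinatorial-modulus argument would not suffice.
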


To prove Theorem \ref{t1}, we first show that there is an integer $n\geq 1$ such that $f^n(\partial K)=\partial K$, which, combined with Lemma \ref{1st lemma}, implies that $f^n=id$ on $S$. This shows that every homeomorphism in Theorem \ref{t1} must be periodic, and hence we can apply Theorem \ref{c1} to obtain $f=id$ on $S$.

\medskip

Let $\mathbb{C}^*=\mathbb{C}\setminus\{0\}$. Define its line element and its area element by
$$
ds=\frac{|dz|}{|z|}\ \mbox{ and }\ d\sigma=\frac{dxdy}{|z|^2}.
$$
The metric given by the line element is known as the quasihyperbolic metric of $\mathbb{C}^*$; see \cite{GP}.
We say that $K$ is a $\mathbb{C}^*$-rectangle if
$$K=\{te^{i\theta}: a<t<b,\ \alpha<\theta<\beta\},$$
where $0<a<b<\infty$ and $0<\beta-\alpha<2\pi$. In this case, we say that $\log({b}/{a})$ and $\beta-\alpha$ are respectively its radial and circular directional  side-lengths. Moreover, if $\log({b}/{a})=\beta-\alpha$, we say that $K$ is a $\mathbb{C}^*$-square. A finite $\mathbb{C}^*$-cylinder means a set of the form $\{a\leq|z|\leq b\}$.

Let $A:=\{1\leq|z|\leq r\}$ be a finite $\mathbb{C}^*$-cylinder, where $r>1$. A carpet
$$
S:=A\setminus\bigcup_{k=1}^\infty Q_k
$$
is called a $\mathbb{C}^*$-square carpet in $A$ if $Q_k$, $k\geq 1$, are $\mathbb{C}^*$-squares. In this case, we say that $|z|=1$ and $|z|=r$ are respectively the inner and outer peripheral circles of $S$.

By Bonk and Merenkov \cite[Theorem 1.5]{BM1}, if $S$ is a $\mathbb{C}^*$-square carpet of measure zero in a $\mathbb{C}^*$-cylinder $A$ and $f$ is an orientation-preserving quasisymmetric self homeomorphism of $S$ that maps the inner and outer peripheral circles onto themselves respectively, then $f$ is the identity or a rational rotation.

Let $K$ be a $\mathbb{C}^*$-rectangle with $\overline{K}\subset\mbox{int}(A)$, where $A=\{1\leq|z|\leq r\}$. We say that a carpet
$$
S=(A\setminus K)\setminus\bigcup_{k=1}^\infty Q_k
$$
is a $\mathbb{C}^*$-square carpet in $A\setminus K$ if $Q_k$, $k\geq 1$, are $\mathbb{C}^*$-squares. By the same argument as that of \cite[Theorem 1.5]{BM1}, we have the following lemma.

\begin{lemma}\label{2cd lemma}
Let $S$ be a $\mathbb{C}^*$-square carpet of measure zero in  $A\setminus K$. Let $f$ be an orientation-preserving quasisymmetric self homeomorphism of $S$ that maps $\partial K$, the inner, and the outer peripheral circles of $S$ onto themselves, respectively. Then $f=id$ on $S$.
\end{lemma}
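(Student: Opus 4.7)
The plan is to mimic the proof of Bonk--Merenkov's \cite[Theorem 1.5]{BM1}, passing through the logarithm to a Euclidean rectangle-ring picture where Lemma~\ref{1st lemma} applies. The extra hypothesis $f(\partial K)=\partial K$ will kill off the rotational freedom left open in that theorem.

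First I would lift via the universal cover $\pi:\mathbb{C}\to\mathbb{C}^*$, $\pi(w)=e^{w}$, which is a local isometry between the flat metric $|dw|$ and the quasihyperbolic metric $|dz|/|z|$. Then $A$ lifts to the strip $\tilde A=[0,\log r]\times\mathbb{R}$, the $\mathbb{C}^*$-rectangle $K$ lifts to a $2\pi i$-periodic family $\log K+2\pi i n$ of Euclidean rectangles, each $\mathbb{C}^*$-square $Q_k$ lifts to $2\pi i$-periodic copies of a genuine Euclidean square, and $S$ lifts to a $2\pi i$-periodic Euclidean square carpet $\tilde S$ of measure zero in $\tilde A\setminus\bigcup_{n\in\mathbb{Z}}(\log K+2\pi i n)$. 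Because $f$ is orientation-preserving and fixes the inner and outer peripheral circles setwise, it induces the identity on $\pi_1(A)$ and lifts to a quasisymmetric homeomorphism $\tilde f:\tilde S\to\tilde S$ commuting with the deck transformation $T:w\mapsto w+2\pi i$. The hypothesis $f(\partial K)=\partial K$ together with this commutation forces $\tilde f$ to permute $\{\partial(\log K)+2\pi i n\}_{n\in\mathbb{Z}}$ by a single shift, and after replacing $\tilde f$ by $T^{-k}\circ\tilde f$ for the appropriate $k$ I may assume $\tilde f(\partial(\log K))=\partial(\log K)$.

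Next, choose $\delta>0$ so small that $\log K$ lies in the interior of the fundamental rectangle $R_{0}=[0,\log r]\times[\alpha-\delta,\alpha-\delta+2\pi]$; then $R_{0}\setminus\log K$ is a Euclidean rectangle ring carrying the Euclidean square carpet $\tilde S\cap R_{0}$. A carpet-modulus comparison, exploiting that $\tilde f$ preserves the two strip edges $\{0\}\times\mathbb{R}$ and $\{\log r\}\times\mathbb{R}$ and that the inner arc $\{\log a\}\times[\alpha,\beta]$ and outer arc $\{\log b\}\times[\alpha,\beta]$ of $\partial(\log K)$ are distinguished from each other by the moduli of the curve families joining them to the nearer strip edge, shows that $\tilde f$ must send each horizontal arc of $\partial(\log K)$ to itself; orientation-preservation then pins down the four corners of $\log K$. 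The same modulus considerations force the rotation numbers of $\tilde f$ on the two strip edges to vanish, so that $\delta$ can be adjusted so that $\tilde f$ also preserves the top and bottom horizontal edges of $R_{0}$, and hence descends to a quasisymmetric self-map of $\tilde S\cap R_{0}$ fixing all four vertices of $R_{0}$ and preserving $\partial(\log K)$. Lemma~\ref{1st lemma} then yields $\tilde f|_{\tilde S\cap R_{0}}=\mathrm{id}$, and by $T$-equivariance $\tilde f=\mathrm{id}$ on all of $\tilde S$, whence $f=\mathrm{id}$ on $S$.

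The hard step is the reduction in the previous paragraph: turning $\tilde f$ into a genuine self-map of the fundamental rectangle ring $R_{0}\setminus\log K$ that fixes all four outer vertices. Ensuring the top and bottom horizontal edges of $R_{0}$ are preserved is precisely where the hypothesis $f(\partial K)=\partial K$ is indispensable, because without it the same argument only establishes that $\tilde f$ is a vertical translation of $\tilde A$, i.e.\ that $f$ is a rational rotation of $A$, which is exactly the weaker conclusion of \cite[Theorem 1.5]{BM1}.
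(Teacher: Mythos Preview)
Your plan diverges from the paper's. The paper does not lift to the strip at all: it simply says the proof is ``the same argument as that of \cite[Theorem~1.5]{BM1}'', i.e.\ one runs the carpet-modulus argument directly in the $\mathbb{C}^*$-cylinder (as in the proof of Theorem~\ref{t1.6} later in the paper) to conclude that $f$ is a $\mathbb{C}^*$-isometry preserving the inner and outer circles, hence a rotation; then the extra hypothesis $f(\partial K)=\partial K$, together with the fact that a $\mathbb{C}^*$-rectangle with angular extent $<2\pi$ has no nontrivial rotational symmetry, forces the rotation to be the identity. No fundamental domain and no appeal to Lemma~\ref{1st lemma} are needed.

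Your lift-and-cut route has a real obstruction. For Lemma~\ref{1st lemma} to apply, $\tilde S\cap R_0$ must be a square carpet in the rectangle ring $R_0\setminus\log K$; in particular the horizontal edges $v=\alpha-\delta$ and $v=\alpha-\delta+2\pi$ must miss every lifted hole. But the holes are dense (the carpet has empty interior) and their $v$-projections are open intervals whose union is an open dense subset of $[0,2\pi)$; there is no reason its complement should be nonempty, and for many carpets every horizontal line through the strip meets some hole. When that happens, $\tilde S\cap R_0$ has holes sliced into non-square rectangles glued to $\partial R_0$, so it is not a square carpet in a rectangle ring and Lemma~\ref{1st lemma} does not apply. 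Even granting a clean cut, you have not justified why $\tilde f$ should preserve those particular horizontal edges, nor why the ``same modulus considerations'' pin down the four corners of $R_0$; the modulus comparisons you allude to give information about $\partial(\log K)$, not about the artificial top and bottom of $R_0$. The direct $\mathbb{C}^*$ argument avoids all of this.
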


We get the following result without assuming $f(\partial K)=\partial K$.
\begin{thm}\label{t1.6}
Let $S$ be a $\mathbb{C}^*$-square carpet of measure zero in  $A\setminus K$. Let $f$ be an orientation-preserving quasisymmetric self homeomorphism of $S$ that maps the inner and outer peripheral circles of $S$ onto themselves, respectively. Then $f$ is periodic.
\end{thm}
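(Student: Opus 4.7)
The plan is to find an integer $n\ge 1$ with $f^n(\partial K)=\partial K$ and then to apply Lemma \ref{2cd lemma} to $f^n$ --- which still preserves the inner and outer peripheral circles of $S$ orientedly --- to conclude $f^n=\mathrm{id}$ on $S$; this yields the desired periodicity of $f$.

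First I would use Bonk's \cite[Proposition 5.1]{B} to extend $f$ to a quasiconformal map $F\colon\mathbb{C}\to\mathbb{C}$; this is possible since $f$ preserves the outer peripheral circle $\{|z|=r\}$. Because $F$ also preserves the inner peripheral circle $\{|z|=1\}$ setwise and sends $S$ to itself, it restricts to a QC self-homeomorphism of $A=\{1\le|z|\le r\}$ that permutes the bounded complementary components $\{K,Q_1,Q_2,\ldots\}$ of $S$ in $A$. The problem reduces to showing that the $F$-orbit of $K$ within this countable family is finite.

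Then I would split into two sub-cases according to the shape of $K$. If $K$ is itself a $\mathbb{C}^*$-square, then absorbing $K$ into the list $\{Q_k\}$ exhibits $S$ as a $\mathbb{C}^*$-square carpet of measure zero in the cylinder $A$ (in the pure sense), and Bonk--Merenkov's \cite[Theorem 1.5]{BM1} applies directly to $f$, giving that $f$ is the identity or a rational rotation of $\mathbb{C}^*$ and hence periodic. If instead $K$ is a non-square $\mathbb{C}^*$-rectangle, then the aspect ratio of $K$ differs from $1$ while every $Q_k$ has aspect ratio $1$; I expect a carpet-modulus/replacement argument in the spirit of \cite{B,BM1} to establish that the conformal modulus of the complementary component of a peripheral circle is a quasisymmetric invariant of $S$, thereby forcing $F(\partial K)=\partial K$. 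Lemma \ref{2cd lemma} then gives $f=\mathrm{id}$ on $S$, which is trivially periodic.

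The hard part will be making the non-square case rigorous --- specifically, showing that the aspect ratio of the complementary component of a peripheral circle is preserved under every orientation-preserving QS self-homeomorphism of $S$ that fixes the inner and outer peripheral circles setwise. The intended route is the standard replacement trick: modify the QC extension $F$ on each square $Q_k$ to be a $\mathbb{C}^*$-conformal (i.e.\ affine in the $\log z$-coordinate) map between squares matching the boundary data of $F$, then use the measure-zero hypothesis on $S$ together with a modulus comparison for the single non-square hole $K$ to conclude that $F$ must fix $K$ setwise. Once finite orbit is established in either sub-case, Lemma \ref{2cd lemma} completes the proof.
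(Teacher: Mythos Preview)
Your overall scaffold --- extend to a quasiconformal $F$, show that the $F$-orbit of $K$ among the holes is finite, then apply Lemma~\ref{2cd lemma} to some iterate --- is exactly the paper's strategy, and your treatment of the square case via \cite[Theorem 1.5]{BM1} matches too.

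The gap is in the non-square case. You propose to show that the aspect ratio of a complementary component is a quasisymmetric invariant, hence $F(\partial K)=\partial K$ outright (i.e.\ $n=1$), and then conclude $f=\mathrm{id}$ from Lemma~\ref{2cd lemma}. But this is precisely the content of the paper's Open Problem: the authors conjecture, and cannot prove, that $f=\mathrm{id}$ when $K$ is not a $\mathbb{C}^*$-square. Your replacement trick will not give aspect-ratio invariance: once $F(K)=Q_p$ and $F(Q_q)=K$, the affine-in-$\log z$ replacements on $K$ and on $Q_q$ are \emph{not} conformal (rectangle to square and square to rectangle), so the modified map is not globally conformal and you lose the rigidity that would pin down $F(K)$.

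What the paper actually proves is weaker but sufficient for periodicity. Assuming $f^n(\partial K)\neq\partial K$ for all $n$, the iterates $F^n(K)$ are distinct squares $Q_k$, so $l^*(F^n(K))\to 0$. On the other hand, an explicit admissible density $\rho$ for the radial (if $w>h$) or circular (if $w<h$) path family in $A$ --- taking the value $l^*(F(Q))/l^*(Q)$ on each hole $Q$, with the obvious adjustment on $K$ and on $Q_q$ --- yields after the modulus inequality
\[
\Bigl(\tfrac{w}{h}-1\Bigr)h^2 \le \Bigl(\tfrac{w}{h}-1\Bigr)\,l^*(F(K))^2
\quad\text{(resp.\ with $w$ and $h$ swapped)},
\]
hence $\min\{w,h\}\le l^*(F^n(K))$ for every $n$, a contradiction. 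This gives only \emph{some} $n$ with $f^n(\partial K)=\partial K$, which is all that Lemma~\ref{2cd lemma} needs to force $f^n=\mathrm{id}$; whether one can take $n=1$ is left open.
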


\begin{prob}
If the $\mathbb{C}^*$-rectangle $K$ in Theorem \ref{t1.6} is not a $\mathbb{C}^*$-square, we conjecture that the conditions of this theorem force $f$ to be the identity on $S$, but a proof remains elusive.
\end{prob}

The paper is organized as follows. We prove Theorems \ref{l3} and \ref{l2} in Section 2. In Section 3, we establish a topological rigidity result for periodic orientation-preserving self homeomorphisms of Jordan closed domains (Lemma \ref{l1}), which together with Theorem \ref{l2} yields Theorem \ref{c1}. Finally, we prove
Theorems \ref{t1} and \ref{t1.6} in Section 4.

\section{Periodic Quasiconformal Extensions}

In this section, we prove Theorems \ref{l3} and \ref{l2}.

\medskip

\noindent{\bf Proof of Theorem \ref{l3}.} Let $\gamma$ be a quasicircle in $\mathbb{C}$ and let $f:\gamma\to \gamma$ be a $k$-periodic orientation-preserving quasisymmetric homeomorphism. Let $g:\mathbb{C}\to \mathbb{C}$ be a quasiconformal homeomorphism such that $g(\gamma)$ is the unit circle $ S^1 $. Then $gfg^{-1}: S^1 \to  S^1 $ is a $k$-periodic orientation-preserving quasisymmetric homeomorphism. If $gfg^{-1}$  has a
$k$-periodic quasiconformal extension $F:\mathbb{C}\to \mathbb{C}$ then $g^{-1}Fg$ is a $k$-periodic quasiconformal extension of $f$.

Therefore we only need to show that every $k$-periodic orientation-preserving quasisymmetric homeomorphism $f: S^1 \to  S^1 $ has a $k$-periodic quasiconformal extension $F:\mathbb{C}\to \mathbb{C}$.
To verify this, we adopt a strategy as follows: First, we show that, for each $r\in(0,1)$, $f$ has a $k$-periodic quasisymmetric extension $\widetilde{f}: A_r\to A_r$, where $$A_r=\{r\leq|z|\leq1\}.$$ Then we show that $\widetilde{f}$ has a $k$-periodic quasiconformal extension  $F:\mathbb{C}\to \mathbb{C}$ by repeatedly applying the Schwarz reflection principle.

In what follows, by $A\asymp B$ we mean $C^{-1}B\leq A\leq CB$ and by $A\preceq B$ we mean $A\leq CB$ for some constant $C\geq 1$. The proof is completed by the following three lemmas.

\begin{lemma}\label{l2.3}
Let $\gamma_1,\gamma_2$ be quasicircles in $\mathbb{C}$,  $h:\gamma_1\to \gamma_2$ be a homeomorphism, and $a,b,c,d\in \gamma_1$ be four distinct points  in clockwise (or counterclockwise) order. Suppose that the restriction of $h$ to each of the four proper subarcs $ab$ $bc$, $cd$, and $da$ of $\gamma_1$ is quasisymmetric and that, for every point $v$ in $\{a,b,c,d\}$, there exists a constant $M\geq 1$ such that
$$
|h(v)-h(s)|\leq M|h(v)-h(t)|
$$
for all $s, t\in\gamma_1$ with $|v-s|\leq|v-t|$. Then $h$ is quasisymmetric.
\end{lemma}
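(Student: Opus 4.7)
The plan is to first show that $h$ is weakly quasisymmetric on $\gamma_1$ and then invoke the cited Tukia--V\"{a}is\"{a}l\"{a} result (a weakly quasisymmetric embedding of a connected subset of $\mathbb{R}^n$ into $\mathbb{R}^m$ is quasisymmetric) to upgrade to full quasisymmetry; this upgrade is legitimate because $\gamma_1\subset\mathbb{C}$ is connected. Let $H_0\geq 1$ be a common weak-quasisymmetry constant valid for each of the four restrictions $h|_{ab},h|_{bc},h|_{cd},h|_{da}$.

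Fix $x,y,z\in\gamma_1$ with $|x-y|\leq|x-z|$ and set $L:=|x-z|$; the goal is a uniform bound $|h(x)-h(y)|\leq H|h(x)-h(z)|$. The quasicircle three-point condition on $\gamma_1$ furnishes a sub-arc $\sigma\subset\gamma_1$ containing $\{x,y,z\}$ with $\mbox{diam}(\sigma)\leq 2cL$, obtained as the union of the short arcs from $x$ to $y$ and from $x$ to $z$. I then split by scale. In the regime $L\geq L_0$ (with $L_0$ fixed below), compactness of $\gamma_1$ and uniform continuity of $h^{-1}$ force $|h(x)-h(z)|\geq\delta_0>0$, and the bound follows crudely from $|h(x)-h(y)|\leq\mbox{diam}(\gamma_2)$. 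In the regime $L<L_0$, I choose $L_0$ so that $2cL_0$ is strictly less than every pairwise vertex-distance; then $\sigma$ contains at most one vertex in its interior. If $\sigma$ contains no vertex, it lies in a single arc among $ab,bc,cd,da$ and weak quasisymmetry on that arc closes the case.

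The main obstacle is when $\sigma$ crosses exactly one vertex $v$, with the three points distributed across the two arcs $A,B$ adjacent to $v$. The strategy is the triangle inequality $|h(x)-h(y)|\leq|h(x)-h(v)|+|h(v)-h(y)|$ combined with case analysis on the relative sizes of $|v-x|,|v-y|,|v-z|$, using two principal tools: quasisymmetry of $h$ on each of $A,B$ (which, via $|x-v|\leq\mbox{diam}(\sigma)\leq 2c|x-z|$, controls $|h(x)-h(v)|$ by a multiple of $|h(x)-h(z)|$ whenever $x,v,z$ share an arc) and the vertex condition at $v$ (which bounds $|h(v)-h(y)|$ by $M|h(v)-h(s)|$ whenever $|v-y|\leq|v-s|$, allowing $y$ to be compared to $x$ or $z$ when either is farther from $v$).

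The most delicate sub-case is $|v-y|>\max(|v-x|,|v-z|)$. Here I plan to introduce an auxiliary point $\xi$ on the arc (say $A$) that contains $v$ together with $z$, satisfying $|v-\xi|=|v-y|$; such a $\xi$ exists by continuity of $p\mapsto|v-p|$ on $A$ and the choice $|v-y|\leq 2cL_0<|v-u|$, where $u$ is the far vertex of $A$. The vertex condition gives $|h(v)-h(y)|\leq M|h(v)-h(\xi)|$, while quasisymmetry on $A$ gives $|h(v)-h(\xi)|\leq\eta_0(|v-\xi|/|v-z|)|h(v)-h(z)|$. In the relevant configurations the quasicircle three-point condition applied to the chord $vz$ forces $|v-z|\geq L/c$ (since $x$ sits on the short arc from $v$ to $z$ in $A$), so $|v-y|/|v-z|\leq 2cL/(L/c)=2c^2$; combining with $|h(v)-h(z)|\leq\eta_0(2c)|h(x)-h(z)|$, also from quasisymmetry on $A$, yields $|h(v)-h(y)|\leq M\eta_0(2c^2)\eta_0(2c)|h(x)-h(z)|$. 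The remaining configurations are handled by the same auxiliary-point device on the arc appropriate to each, and assembling everything via the triangle inequality produces the required uniform $H=H(c,H_0,M)$.
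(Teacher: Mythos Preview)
Your strategy is the same as the paper's: establish weak quasisymmetry of $h$ and then upgrade via the Tukia--V\"ais\"al\"a principle. The paper, however, first conjugates by quasisymmetric parameterizations $\gamma_i:S^1\to\gamma_i$ to reduce to $\gamma_1=\gamma_2=S^1$, and only then carries out the ``direct computation'' (which it in fact leaves to the reader). You instead perform the computation on the quasicircles themselves, which is perfectly legitimate and in some ways more informative, but it forces you to confront one issue that the reduction to $S^1$ hides.

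The point to watch is the configuration in which $x$ and $z$ lie on \emph{opposite} sides of the vertex $v$ (i.e.\ $x\in B$, $z\in A$). In your detailed sub-case you tacitly place $x,v,z$ on the same arc $A$ (you use ``$x$ sits on the short arc from $v$ to $z$ in $A$'' and then ``$|h(v)-h(z)|\leq\eta_0(2c)|h(x)-h(z)|$ from quasisymmetry on $A$''), and you assert that ``the remaining configurations are handled by the same auxiliary-point device.'' But when $x$ and $z$ straddle $v$, no single-arc quasisymmetry estimate and no auxiliary point on one arc will by itself yield $\max(|h(v)-h(x)|,|h(v)-h(z)|)\lesssim|h(x)-h(z)|$; the vertex condition at $v$ gives only $|h(v)-h(x)|\le M|h(v)-h(z)|$ (or the reverse), which is not a lower bound on $|h(x)-h(z)|$. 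What closes this case is the three-point condition on $\gamma_2$: the sub-arc of $\gamma_2$ from $h(x)$ to $h(z)$ through $h(v)$ is the short one (the complementary arc contains the images of the other three vertices and hence has diameter bounded below), so $|h(v)-h(x)|,|h(v)-h(z)|\le c_2\,|h(x)-h(z)|$. On $S^1$ this reverse-triangle step is automatic, which is exactly what the paper's preliminary reduction buys. Once you insert this one observation, your argument goes through.
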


\begin{proof}
In the case  $\gamma_1=\gamma_2= S^1 $, a direct computation by using the assumptions shows that $h$ is weakly quasisymmetric, which is left to the readers. Since every weakly quasisymmetric self homeomorphism of $ S^1 $ is quasisymmetric, we conclude that $h$ is quasisymmetric.

In the general case, let $\gamma_1:  S^1 \to \gamma_1$ and $\gamma_2:  S^1 \to \gamma_2$ be respectively quasisymmetric parameterizations of $\gamma_1$ and $\gamma_2$. Then we easily check that  $\gamma_2^{-1} h \gamma_1: S^1 \to S^1 $ satisfies the conditions
of the theorem for the points $\gamma_1^{-1}(a)$, $\gamma_1^{-1}(b)$, $\gamma_1^{-1}(c)$, and $\gamma_1^{-1}(d)$. Thus $\gamma_2^{-1}h\gamma_1$ is quasisymmetric by applying the conclusion of the special case. It then follows that $h$ is quasisymmetric. Here we abuse notation by using the same symbol for a quasicircle and its parameter representation.
\end{proof}

The following lemma is a key ingredient of the proof.

\begin{lemma}\label{l2.5}
Let $A_r:=\{r\leq|z|\leq1\}$ be a closed annulus, where $0<r<1$. Then every $k$-periodic orientation-preserving quasisymmetric homeomorphism $f: S^1 \to  S^1 $ has a $k$-periodic quasiconformal extension $\widetilde{f}: A_r\to A_r$.
\end{lemma}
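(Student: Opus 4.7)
My approach is to quasisymmetrically conjugate $f$ on $S^1$ to a rigid rotation, extend the conjugating map to a quasiconformal self-map of $A_r$ that equals the identity on the inner boundary, and then define $\tilde{f}$ as the conjugate of the (Euclidean) rotation of $A_r$ by this extended map.

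First, I would lift $f$ to an orientation-preserving homeomorphism $\phi:\mathbb{R}\to\mathbb{R}$ satisfying $\phi(v+2\pi)=\phi(v)+2\pi$ and $\phi^k(v)=v+2\pi m$ for some integer $m$, and define
$$\sigma(v) := \frac{1}{k}\sum_{j=0}^{k-1}\bigl(\phi^j(v) - 2\pi jm/k\bigr).$$
An index-shift computation yields $\sigma(\phi(v))=\sigma(v)+2\pi m/k$, so $\sigma$ descends to a self-homeomorphism of $S^1$ conjugating $f$ to the rigid rotation $R := R_{2\pi m/k}$. Because each iterate $\phi^j$ is quasisymmetric, and the average of finitely many increasing quasisymmetric homeomorphisms of $\mathbb{R}$ remains quasisymmetric (the multiplicative ratio bound $\frac{h(x+t)-h(x)}{h(x)-h(x-t)}\leq M$ is inherited by sums via $\frac{\sum a_i}{\sum b_i}\leq \max\frac{a_i}{b_i}$), the map $\sigma$ is quasisymmetric.

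Next, I would construct a quasiconformal extension $\tilde{\sigma}:A_r\to A_r$ of $\sigma$ whose restriction to the inner circle $\{|z|=r\}$ is the identity. Passing to the universal cover via $z=e^{u+iv}$, the annulus $A_r$ becomes the strip $\Pi=[\log r,0]\times\mathbb{R}$ (modulo $v\mapsto v+2\pi$), with outer and inner circles corresponding to $u=0$ and $u=\log r$; let $\tilde{\sigma}:\mathbb{R}\to\mathbb{R}$ also denote the lift of $\sigma$. I would look for $F:\Pi\to\Pi$ of the form $F(u+iv)=u+iG(u,v)$ with $G(0,v)=\tilde{\sigma}(v)$, $G(\log r,v)=v$, and $G(u,v+2\pi)=G(u,v)+2\pi$. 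Taking $G$ to be a Beurling-Ahlfors-type integral average that smooths $\tilde{\sigma}$ on a scale comparable to the distance to the outer boundary and interpolates to the identity on the scale $|\log r|$, a dilatation estimate analogous to Beurling and Ahlfors' original half-plane argument (adapted to the periodic strip with the identity-on-the-far-boundary constraint) shows that $F$ is quasiconformal. Descending to $A_r$ gives the desired $\tilde{\sigma}$.

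Finally, let $R$ also denote the Euclidean rotation $z\mapsto e^{2\pi im/k}z$ acting on $A_r$, and set $\tilde{f}:=\tilde{\sigma}^{-1}\circ R\circ\tilde{\sigma}$. This is a quasiconformal self-map of $A_r$; its restriction to $S^1$ equals $\sigma^{-1}R\sigma=f$; and $\tilde{f}^k=\tilde{\sigma}^{-1}R^k\tilde{\sigma}=\mathrm{id}$. The main obstacle is the second step: the classical Beurling-Ahlfors construction gives a quasiconformal extension to all of $\bar{\mathbb{D}}$ but offers no natural control on a smaller circle $\{|z|=r\}$, while a naive linear interpolation between $\tilde{\sigma}$ and the identity inside the strip fails because $\tilde{\sigma}$ need not be differentiable. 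The correct construction must use integral averages together with a careful dilatation estimate showing that the bound stays uniform in terms of the quasisymmetry constant of $\sigma$ and the width $|\log r|$.
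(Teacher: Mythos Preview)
Your approach is genuinely different from the paper's and conceptually appealing: rather than building $\tilde f$ directly, you first linearize $f$ by conjugating it to a rigid rotation and then transport the rotation back. Step~1 is correct and complete --- the averaged lift $\sigma$ does conjugate $f$ to $R_{2\pi m/k}$, and the quasisymmetry of $\sigma$ follows exactly as you say. Step~3 is also correct once Step~2 is granted.

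The gap is precisely where you locate it: Step~2. The existence of a quasiconformal self-map of $A_r$ equal to $\sigma$ on $S^1$ and to the identity on $rS^1$ is true, but your sketch does not establish it, and the difficulty is real. The classical Beurling--Ahlfors formula does not preserve horizontal lines in the strip, so its restriction is not a self-map of a finite strip; and, as you note, naive interpolation fails because $\tilde\sigma'$ is unbounded. Producing a correct two-sided strip extension with controlled dilatation requires an argument of roughly the same weight as the paper's entire proof of the lemma. (A minor point: you do not actually need $\tilde\sigma|_{rS^1}=\mathrm{id}$; it suffices that $\tilde\sigma(rS^1)=rS^1$ setwise, since the rotation $R$ already preserves every circle centred at the origin. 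This relaxation does not, however, make Step~2 substantially easier.)

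The paper avoids this obstacle entirely. It never introduces the conjugacy $\sigma$: instead it picks a point $u_0\in S^1$, uses its orbit $u_0,f(u_0),\dots,f^{k-1}(u_0)$ and the corresponding radial segments to cut $A_r$ into $k$ quasidisks $D_0,\dots,D_{k-1}$, defines boundary maps $h_j:\partial D_j\to\partial D_{j+1}$ piecewise from $f$ (transported to the radial and inner-arc pieces via fixed bilipschitz parameterizations), verifies each $h_j$ is quasisymmetric using the corner-gluing Lemma~\ref{l2.3}, extends each $h_j$ to a quasiconformal $H_j:D_j\to D_{j+1}$ by the ordinary Beurling--Ahlfors theorem for a single quasicircle, and finally sets $H_{k-1}:=(H_{k-2}\cdots H_0)^{-1}$ to force $k$-periodicity of the glued map. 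Only the standard one-sided Beurling--Ahlfors extension is needed.

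It is worth noting that your Step~1 alone already yields the paper's Theorem~\ref{l3} with almost no further work: extend $\sigma$ by Beurling--Ahlfors to a quasiconformal $\Sigma:\mathbb{C}\to\mathbb{C}$, and then $\Sigma^{-1}R\,\Sigma$ is a $k$-periodic quasiconformal extension of $f$ to all of $\mathbb{C}$, bypassing both the present lemma and the subsequent reflection argument (Lemma~\ref{l2.4}). What your approach does \emph{not} immediately give is the specific annulus statement $\tilde f:A_r\to A_r$, since $\Sigma^{-1}R\,\Sigma$ need not preserve $rS^1$.
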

\begin{proof}
Without loss of generality assume that $k$ is the smallest period of $f$ and that $k\geq 2$.

Let $u_j=f^j(u_0)$ and let $l_j$ be the straight line segment joining $ru_j$ and $u_j$, where $u_0$ is an arbitrarily given point on $ S^1 $ and $j=0,1,\cdots, k+1$. Since $f$ is orientation-preserving, we see that $u_0,u_1,\cdots, u_{k-1}$ are distinct points on $ S^1 $ in the clockwise or counterclockwise direction, $u_k=u_0$, $l_k=l_0$, $u_{k+1}=u_1$, and $l_{k+1}=l_1$.

For each $j\leq k$, let $\alpha_j$ be the subarc of $ S^1 $ from $u_{j}$ to $u_{j+1}$ in the same direction as above. Then the interiors of $\alpha_0,\alpha_1,\cdots, \alpha_{k-1}$ in $ S^1 $ are pairwise disjoint and $\alpha_k=\alpha_0$. Let $$C_j=l_{j}\cup\alpha_j\cup l_{j+1}\cup r\alpha_j,$$ where $r\alpha_j=\{rz: z\in\alpha_j\}$. Then $C_j$ is a quasicircle in $A_r$ and $C_k=C_0$.
Let $D_j$ be the closed quasi-disk in $A_r$ bounded by $C_j$. Then the interiors of $D_0,D_1,\cdots, D_{k-1}$ are pairwise disjoint, $D_k=D_0$, and
\begin{equation}\label{d1}
A_r=\bigcup_{j=0}^{k-1}D_j.
\end{equation}
%One has $D_0\cap D_1=l_0\cup l_1$ for $k=2$ and $D_{j}\cap D_{j+1}=l_{j+1}$ for $k>2$ and $j<k$.

For each $j<k$ let $p_j:l_j\to\alpha_j$ be a bilipschitz map such that
$$
p_j(u_j)=u_j\ \mbox{ and }\ p_j(ru_j)=u_{j+1}.
$$
Let $p_{k}=p_0$ and $p_{k+1}=p_1$. Then $p_{j+1}^{-1} f p_j:l_j\to l_{j+1}$ is a quasisymmetric homeomorphism
satisfying
$$
p_{j+1}^{-1} f p_j(u_j)=u_{j+1}\ \mbox{ and }\ p_{j+1}^{-1} f p_j(ru_j)=ru_{j+1}.
$$

For each $j<k$ let $q_j:r\alpha_j\to\alpha_{j+1}$ be a bilipschitz map such that
$$
q_j(ru_j)=u_{j+1}\ \mbox{ and }\ q_j(ru_{j+1})=u_{j+2}.
$$
Let $q_{k}=q_0$. Then $q_{j+1}^{-1} f q_j:r\alpha_j\to r\alpha_{j+1}$ is a quasisymmetric homeomorphism
satisfying
$$
q_{j+1}^{-1} f q_j(ru_j)=ru_{j+1}\, \mbox{ and }\, q_{j+1}^{-1} f q_j(ru_{j+1})=ru_{j+2}.
$$

For each $j<k$ let $h_j:C_j\to C_{j+1}$ be defined by
$$h_j(z)=\left\{
\begin{array}{llll}
p_{j+1}^{-1} f p_j(z), & \hbox{if $z\in l_j$} \\ \\
f(z), & \hbox{if $z\in \alpha_j$} \\ \\
p_{j+2}^{-1} f p_{j+1}(z), & \hbox{if $z\in l_{j+1}$}\\ \\
q_{j+1}^{-1} f q_j(z), & \hbox{if $z\in r\alpha_j$.}
\end{array}
\right.$$
Let $h_k=h_0$. By the construction, $h_j$ is a well-defined orientation-preserving homeomorphism that maps the points $ru_j$, $u_j$, $u_{j+1}$, and $ru_{j+1}$ to the points $ru_{j+1}$, $u_{j+1}$, $u_{j+2}$, and $ru_{j+2}$, respectively. Moreover, the restriction of $h_j$ to each of $l_j$, $\alpha_j$, $l_{j+1}$, and $r\alpha_j$ is quasisymmetric. In addition, we have
\begin{equation}\label{b1}
h_j|_{\alpha_j}=f|_{\alpha_j}\ \mbox{ and }\ h_j|_{l_{j+1}}=h_{j+1}|_{l_{j+1}}.
\end{equation}

We claim that
\begin{equation}\label{b2}
h_{k-1}\cdots h_{1}h_0=id \mbox{ on }C_0.
\end{equation}
In fact, if $z\in \alpha_0$, then
$$h_{k-1}\cdots h_{1}h_0(z)=f^k(z)=z.$$
If $z\in l_0$, then
$$h_{k-1}\cdots h_{1}h_0(z)=(p_{k}^{-1} f p_{k-1})\cdots(p_2^{-1} f p_1)(p_1^{-1} f p_0)(z)=p_0^{-1}f^k p_0(z)=z.$$
It is equally straightforward to show that $h_{k-1}\cdots h_{1}h_0=id$ on $l_1$ and $r\alpha_0$.

In this context, we see that, for a given integer $0\leq j<k$, to show that $h_j$ is quasisymmrtric by using Lemma \ref{l2.3}, it suffices to prove that, for every point $v\in\{u_j,u_{j+1}, ru_{j+1}, ru_j\}$, there is a constant $M\geq 1$ such that
\begin{equation}\label{n3}
|h_j(v)-h_j(s)|\leq M|h_j(v)-h_j(t)|
\end{equation}
for all $s, t\in C_j$ with $|v-s|\leq|v-t|$.

We first consider the point $u_j$. Let $s, t\in C_j$ satisfy $|u_j-s|\leq|u_j-t|$. We are going to show
\begin{equation}\label{a1}
|h_j(u_j)-h_j(s)|\preceq|h_j(u_j)-h_j(t),
\end{equation}
where the hidden constant is independent of $s,t$.

If $t\in l_{j+1}\cup r\alpha_j$, one has $h_j(t)\in l_{j+2}\cup r\alpha_{j+1}$, and hence
$$0<d(u_{j+1}, l_{j+2}\cup r\alpha_{j+1})\leq|u_{j+1}-h_j(t)|=|h_j(u_j)-h_j(t)|.$$ Then the inequality (\ref{a1}) follows by
\begin{eqnarray*}
|h_j(u_j)-h_j(s)|&\leq&\mbox{diam}(C_{j+1})\\ &\leq&\frac{\mbox{diam}(C_{j+1})}{d(u_{j+1}, l_{j+2}\cup r\alpha_{j+1})}|h_j(u_j)-h_j(t)|
\end{eqnarray*}
In what follows we assume that $t\in l_{j}\cup \alpha_j$.

If $s,t\in l_j$ or $s,t\in \alpha_j$, the inequality (\ref{a1}) follows by the quasisymmetry of $h_j|_{l_j}$ or $h_j|_{\alpha_j}$, respectively.

If $s\in l_j$ and $t\in\alpha_j$, one has by $p_j(u_j)=u_j$ and the Lipschitz property of $p_j$
$$|u_j- p_j(s)|=|p_j(u_j)- p_j(s)|\preceq|u_j- s|\leq|u_j-t|.$$
Then, by the definition of $h_j$, the bilipschitz property of $p_{j+1}$, the quasisymmetry of $f$, $f(u_j)=h_j(u_j)$, and $f(t)=h_j(t)$, we get
\begin{eqnarray*}
|h_j(u_j)-h_j(a)|&=&|p_{j+1}^{-1}f p_j(u_j)- p_{j+1}^{-1}f p_j(a)|\\
&\asymp& |f(u_j)- fp_j(s)| \\
&\preceq& |f(u_j)- f(t)| \\
&=& |h_j(u_j)-h_j(t)|.
\end{eqnarray*}

If $s\in \alpha_j$ and $t\in l_j$, one has $h_j(s)=f(s)$ and $$|u_j-s|\leq|u_j-t|\preceq|p_j(u_j)-p_j(t)|=|u_j-p_j(t)|.$$ Thus
\begin{eqnarray*}
|h_j(u_j)-h_j(s)| &=& |f(u_j)-f(s)| \\
&\preceq& |f(u_j)-fp_j(t)| \\
&=& |fp_j(u_j)-fp_j(t)| \\
&\asymp& |p_{j+1}^{-1}f p_j(u_j)- p_{j+1}^{-1}f p_j(t)| \\
&=&|h_j(u_j)-h_j(t)|.
\end{eqnarray*}

If $s\in l_{j+1}\cup r\alpha_j$,
then, by the assumptions, one has  $$d(u_j, l_{j+1}\cup r\alpha_j)\leq|u_j-s|\leq|u_j-t|.$$
From the construction of $C_j$ we see that there are exactly two points $w_j$ in $l_{j}\cup \alpha_j$ satisfying $$|u_j-w_j|=d(u_j, l_{j+1}\cup r\alpha_j).$$ We choose one such $w_j$ arbitrarily. Then $|u_j-w_j|\leq|u_j-t|$. Since $t\in l_{j}\cup \alpha_j$ is assumed, we have by the previous conclusion that
$$|h_j(u_j)-h_j(w_j)|\preceq|h_j(u_j)-h_j(t)|.$$
Therefore
$$|h_j(u_j)-h_j(s)|\preceq \mbox{diam}(C_{j+1})\preceq \frac{\mbox{diam}(C_{j+1})}{|h_j(u_j)-h_j(w_j)|}|h_j(u_j)-h_j(t)|.$$

We have considered all cases, and the inequality (\ref{a1}) holds in each one with a hidden constant independent of $s,t$. This eastablishes  the inequality (\ref{n3}) for $v=u_j$ with a constant $M\geq 1$ independent of $s,t$. By an analogous argument, one may prove the same conclusion for $v=u_{j+1}, ru_{j+1}, ru_j$. Therefore $h_j$ is quasisymmetric by Lemma \ref{l2.3}.

Now  it follows from Beurling-Ahlfors theorem that $h_j:C_j\to C_{j+1}$ has a quasiconformal extension $H_j:\mathbb{C}\to \mathbb{C}$ such that $H_j(D_j)=D_{j+1}$ for every integer $0\leq j\leq k-2$. Once these extensions are chosen, we define
$$H_{k-1}=H_0^{-1}H_1^{-1}\cdots H_{k-2}^{-1}.$$
Then $H_{k-1}$ is a quasiconformal extension of $h_{k-1}$ by (\ref{b2}), and we have
\begin{equation}\label{b3}
H_{k-1}H_{k-2}\cdots H_{1}H_0=id\,\mbox{ on }\,D_0,
\end{equation}

Let $\widetilde{f}:A_r\to A_r$ be defined by
$$\widetilde{f}(z)=\left\{
\begin{array}{llll}
H_0(z), & \hbox{if $z\in D_0$} \\ \\
H_1(z), & \hbox{if $z\in D_1$} \\ \\
\cdots & \\ \\
H_{k-1}(z), & \hbox{if $z\in D_{k-1}$.}
\end{array}
\right.$$
By using (\ref{d1}), (\ref{b1}), and (\ref{b3}), we easily check that $\widetilde{f}$ is a well-defined $k$-periodic homeomorphic extension of $f$.
Moreover, since $H_j$'s are quasiconformal, we see that $\widetilde{f}$ is quasiconformal on $\cup_{j=0}^{k-1}\mbox{int}(D_j)$. Thus $\widetilde{f}$ is quasiconformal on $\mbox{int}(A_r)$  by the removability of quasiconformality, and hence quasisymmetric on $A_r$.
\end{proof}

\begin{lemma}\label{l2.4}
Let $r\in(0,1)$, $A_r=\{r\leq|z|\leq1\}$, and $f: A_r\to A_r$ be a $k$-periodic quasisymmetric homeomorphism with $f( S^1 )= S^1 $. Then $f$ has a $k$-periodic quasiconformal extension $F: \mathbb{C}\to \mathbb{C}$.
\end{lemma}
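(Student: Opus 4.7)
My plan is to extend $f$ to all of $\mathbb{C}$ by iterated Schwarz reflection across the concentric circles that arise. First observe that since $f$ is a self-homeomorphism of $A_r$ with $f(S^1)=S^1$, and $\partial A_r$ has two components, $f$ also sends the inner circle $\{|z|=r\}$ onto itself. For $\rho>0$ let $R_\rho(z):=\rho^2/\bar z$ denote the anti-conformal reflection across $\{|z|=\rho\}$; it is a smooth involution of $\mathbb{C}^*$ fixing $\{|z|=\rho\}$ pointwise.

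I would first extend $f$ across both boundary circles of $A_r$: on the outer annulus $\{1\leq|z|\leq 1/r\}$ set $F(z):=R_1 f R_1(z)=1/\overline{f(1/\bar z)}$, and on the inner annulus $\{r^2\leq|z|\leq r\}$ set $F(z):=R_r f R_r(z)=r^2/\overline{f(r^2/\bar z)}$. Both pieces agree with $f$ on the circles across which they are reflected. A direct inspection shows that the extended $F$ preserves the new boundaries $\{|z|=1/r\}$ and $\{|z|=r^2\}$, so I can iterate the process: at the $n$th step, reflect across the current outer and inner boundaries to add one more annulus on each side. Taking the union yields $F$ on $\mathbb{C}^*=\mathbb{C}\setminus\{0\}$, and I would set $F(0):=0$.

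Next I would verify the required properties. For $k$-periodicity, each restriction of $F$ to one of the resulting annuli has the form $T\circ f\circ T^{-1}$ with $T$ a composition of reflections acting as an involution on that region, so $F^k=T\circ f^k\circ T^{-1}=\mathrm{id}$ on every annulus; combined with $F(0)=0$, this gives $F^k=\mathrm{id}$ on $\mathbb{C}$. For quasiconformality on $\mathbb{C}^*$, each reflection is anti-conformal and hence preserves the maximal dilatation, so $F$ is $K(f)$-quasiconformal on every annulus; the interfaces are smooth analytic circles and hence removable, so $F$ is $K(f)$-quasiconformal on all of $\mathbb{C}^*$.

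The main obstacle to anticipate is quasiconformality at the origin. By construction, each concentric closed annulus $\{r^{n+1}\leq|z|\leq r^n\}$ is invariant under $F$, so $|F(z)|\to 0$ as $z\to 0$, giving continuity at $0$. An isolated point is a removable singularity for a bounded quasiconformal homeomorphism of a planar domain, so $F$ is $K(f)$-quasiconformal on all of $\mathbb{C}$. Bijectivity of $F$ on $\mathbb{C}$ follows from bijectivity on each annulus (with inverse $F^{k-1}$ there) together with $F(0)=0$, completing the proof.
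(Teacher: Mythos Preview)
Your proof is correct and follows essentially the same iterated Schwarz-reflection strategy as the paper: the paper reflects inward across $|z|=r$, $|z|=r^2$, $|z|=r^4,\ldots$ to fill the closed disk and then once outward across $S^1$, whereas you reflect alternately inward and outward one layer at a time, but the mechanism and the checks (agreement on interfaces, preservation of the dilatation, $k$-periodicity via conjugation, removability of the circles and of the origin) are identical. One small correction that does not affect the argument: the conjugating map $T$ on the deeper annuli is a composition of several reflections and is \emph{not} an involution in general (for example $R_{1/r}R_1(z)=z/r^2$), but your conclusion $F^k=T\circ f^k\circ T^{-1}=\mathrm{id}$ only uses that $F$ is conjugate to $f$ on each invariant annulus, so the proof stands.
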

\begin{proof}
Let $S_r$ denote the circle $|z|=r$ and let
$$R_r(z)=\frac{r^2}{\overline{z}}$$ be the reflection through $S_r$. Then we have $$\mbox{$R_r^2=id$, $R_r(z)=z$ for $z\in S_r$, and $R_r( S^1 )=S_{r^2}$}.$$

Let $f_0=f$. Let $f_1:A_{r^2}\to A_{r^2}$ be defined by
$$f_1(z)=\left\{
\begin{array}{lll}
f_0(z), & \hbox{if $z\in A_r$} \\ \\
R_rf_0R_r(z), & \hbox{if $z\in A_{r^2}\setminus A_r$.}
\end{array}
\right.$$
Since $f_0( S^1 )= S^1 $, one has $f_0(S_r)=S_r$ and $f_0=R_rf_0R_r$ on $S_r$. Thus $f_1$ is a homeomorphic extension of $f_0$ to $A_{r^2}$. Since $f_0$ is $k$-periodic, we easily check that $f_1$ is $k$-periodic by using $R_r^2=id$. Moreover, since $f_0$ is quasisymmetric, there is a constant $K\geq 1$ such that $f_0$ is $K$-quasiconformal on $\mbox{int}(A_r)$. Therefore $f_1$ is $K$-quasiconformal on $\mbox{int}(A_{r^2})$.

Let
$$f_2(z)=\left\{
\begin{array}{lll}
f_1(z), & \hbox{if $z\in A_{r^2}$} \\ \\
R_{r^2}f_0R_{r^2}(z), & \hbox{if $z\in A_{r^4}\setminus A_{r^2}$.}
\end{array}
\right.$$
By an analogous argument, $f_2$ is a $k$-periodic homeomorphic extension of $f_1$ to $A_{r^4}$ and is $K$-quasiconformal on $\mbox{int}(A_{r^4})$.

Inductively, we get a sequence of homeomorphisms $f_m:A_{r^{2^m}}\to A_{r^{2^m}}$ with the following properties: For every integer $m\geq 1$,
$f_m$ is a $k$-periodic homeomorphic extension of $f_{m-1}$ to $A_{r^{2^m}}$ and is $K$-quasiconformal on $\mbox{int}(A_{r^{2^m}})$.

Note that $\{A_{r^{2^m}}\}_{m=0}^\infty$ is an increasing sequence of annuli satisfying
$$\mathbb{D}=\{0\}\cup\bigcup_{m=0}^\infty A_{r^{2^m}},$$
where $\mathbb{D}$ is the closed unit disk. We may define $f_\infty:\mathbb{D}\to \mathbb{D}$ by
$$f_\infty(z)=\left\{
\begin{array}{lll}
f_m(z), & \hbox{if $z\in A_{r^{2^m}}$ for some $m\geq 0$,} \\ \\
0, & \hbox{if $z=0$.}
\end{array}
\right.$$
We easily check by using the above properties of the sequence $\{f_m\}_{m=1}^\infty$ that $f_\infty$ is a well-defined $k$-periodic homeomorphic extension of $f_0$ to $\mathbb{D}$ and is $K$-quasiconformal on $\mbox{int}(\mathbb{D})$.

Finally, let $R$ be the reflection through the unit circle $ S^1 $ and let
$$F(z)=\left\{
\begin{array}{lll}
f_\infty(z), & \hbox{if $z\in \mathbb{D}$} \\ \\
Rf_\infty R(z), & \hbox{if $z\in \mathbb{C}\setminus \mathbb{D}.$}
\end{array}
\right.$$
Then $F$ is a $k$-periodic $K$-quasiconformal extension of $f_0$, as desired.
\end{proof}

\begin{remark}
By the above argument, every $k$-periodic orientation-preserving quasisymmetric homeomorphism $f: S^1 \to  S^1 $ has a $k$-periodic quasiconformal extension $F:\mathbb{C}\to \mathbb{C}$ with $F(0)=0$.
\end{remark}

\noindent{\bf Proof of Theorem \ref{l2}.} Let $S:=D_0\setminus\cup_{j=1}^\infty D_j$ be a quasi-round carpet of measure zero in $\mathbb{C}$ and $f:S\to S$ be a $k$-periodic orientation-preserving quasisymmetric homeomorphism with $f(\partial D_0)=\partial D_0$. According to Bonk \cite[Proposition 5.1]{B}, $f$ has a quasiconformal extension $\widetilde{f}:D_0\to D_0$. Such an extension $\widetilde{f}$ is not necessarily $k$-periodic. We are going to modify it to create a $k$-periodic quasiconformal extension of $f$.

By the assumptions on $f$ and $S$, for every integer $j\geq1$ there is a factor $m_j$ of $k$ such that $\partial D_j, f(\partial D_j),\cdots, f^{m_j-1}(\partial D_j)$ are pairwise distinct peripheral circles of $S$ and that $f^{m_j}:\partial D_j\to\partial D_j$ is a $k/m_j$-periodic orientation-preserving quasisymmetric homeomorphism. Since $\partial D_j$ is a $c$-quasicircle, by Theorem \ref{l3} $f^{m_j}$ has a $k/m_j$-periodic quasiconformal extension $g_j:\overline{D}_j\to \overline{D}_j$. Moreover, since the constant $c$ is independent of $j$ and $1\leq m_j\leq k$, the quasiconformality constant of $g_j$ can be chosen independent of $j$. We call
$$\mathcal{O}_j:=\{\partial D_j, f(\partial D_j),\cdots, f^{m_j-1}(\partial D_j)\}$$ the orbit of the peripheral circle $\partial D_j$ under $f$. It is clear that two orbits are either equal or disjoint. Thus there is a subset $J$ of positive integers such that $\mathcal{O}_j$, $j\in J$, are pairwise disjoint,  with
$$
\bigcup_{j\in J}\mathcal{O}_j=\{\partial D_j: j\geq 1\}.
$$

Since $\widetilde{f}:D_0\to D_0$ is a quasiconformal extension of $f:S\to S$, we see that $D_j, \widetilde{f}(D_j),\cdots,\widetilde{f}^{\,m_j-1}(D_j)$ are components of $D_0\setminus S$ whose closures are pairwise disjoint, $\widetilde{f}^{\,m_j}(D_j)=D_j$,
and that
$$\bigcup_{j\in J}\bigcup_{n=0}^{m_j-1} \widetilde{f}\,^n(D_j)=D_0\setminus S.$$

For each $j\in J$  let
$$U_j=\bigcup_{n=0}^{m_j-1} \widetilde{f}\,^n(\overline{D}_j),\, V_j=\bigcup_{n=0}^{m_j-2} \widetilde{f}\,^n(\overline{D}_j),\, \mbox{ and }W_j=\bigcup_{n=1}^{m_j-1} \widetilde{f}\,^n(\overline{D}_j).$$
We have $\widetilde{f}(U_j)=U_j$ and $\widetilde{f}(V_j)=W_j$. To modify $\widetilde{f}|_{U_j}$ into a $k$-periodic map, we define
a new map $h_j:U_j\to U_j$ by
$$h_j(z)=\left\{
\begin{array}{ll}
\widetilde{f}(z), & \hbox{if $z\in V_j$} \\ \\
g_j\widetilde{f}^{\,-m_j+1}(z), & \hbox{if $z\in \widetilde{f}^{\, m_j-1}(\overline{D}_j)$.}
\end{array}
\right.$$
Since $\overline{D}_j$, $\widetilde{f}(\overline{D}_j)$, $\cdots$, and $\widetilde{f}^{\,m_j-1}(\overline{D}_j)$ are pairwise disjoint and $g_j(\overline{D}_j)=\overline{D}_j$, we see that $h_j$ is a well-defined homeomorphism.
By the definition, one has $h_j^{m_j}=g_j$ on $\overline{D}_j$. Since $g_j$ is $k/m_j$-periodic, we get that $h_j$ is $k$-periodic. Since $\widetilde{f}$ and $g_j$ are quasiconformal, we see that $h_j$ is quasiconformal on $\mbox{int}(U_j)$. Since $m_j\leq k$ and the quasiconformality constants of $\widetilde{f}$ and $g_j$ are independent of $j$, the quasiconformality constant of $h_j$ is independent of $j$.
Finally, we have
$$
h_j=f\,\mbox{ on }\,{\partial U_j},
$$
in fact, if $z\in\partial V_j$, we have $h_j(z)=\widetilde{f}(z)=f(z)$; if $z\in \widetilde{f}^{\, m_j-1}(\partial D_j)$, we have $\widetilde{f}^{\, -m_j+1}(z)\in \partial D_j$, and hence $$h_j(z)=g_j\widetilde{f}^{\, -m_j+1}(z)=f^{m_j}f^{-m_j+1}(z)=f(z).$$
To sum up, $h_j:U_j\to U_j$ is a $k$-periodic  orientation-preserving homeomorphic extension of $f|_{\partial U_j}$, which is $K$-quasiconformal on $\mbox{int}(U_j)$ for some constant $K$ independent of $j$.

Let $F: D_0\to D_0$ be defined to satisfy
$$F=h_j \mbox{ on }U_j \mbox{ for every }j\in J \mbox{ and } F=f\mbox{ on }  S.$$
Then $F$ is a well-defined $k$-periodic orientation-preserving homeomorphic extension of $f$ and is quasiconformal on $D_0\setminus S$. Since $S$ is of measure zero, we see that $F$ is quasiconformal on $\mbox{int}(D_0)$, and hence $F$ is quasisymmetric on $D_0$.

Now let $G:\mathbb{C}\to \mathbb{C}$ be a quasiconformal homeomorphism with $G(D_0)=\mathbb{D}$. Then $GFG^{-1}: \mathbb{D}\to\mathbb{D}$ is a $k$-periodic orientation-preserving quasisymmetric homeomorphism, which has a $k$-periodic quasiconformal extension to $\mathbb{C}$ by the Schwarz reflection principle. Thus $F: D_0\to D_0$ has such an extension. This completes the proof of Theorem \ref{l2}.
$\hfill\Box$

\section{The Proof of Theorem  \ref{c1}}

In this section we prove Theorem \ref{c1}. We start with some basic results on periodic topological self homeomorphisms of Jordan arcs and closed paths.

\begin{lemma}\label{2.1}
Let $h:[s,t]\to[s,t]$ be an increasing homeomorphism. If all points of $[s,t]$ are periodic points of $h$, then $h=id$.
\end{lemma}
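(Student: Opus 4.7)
The plan is short because the lemma is essentially a one-variable monotonicity argument, so I will focus on the logical skeleton rather than elaborate estimates.

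First I would record the easy boundary behavior: since $h$ is an increasing homeomorphism of $[s,t]$ onto itself, it must send $s$ to $s$ and $t$ to $t$. So the endpoints are fixed and it remains to handle an arbitrary interior point $x\in(s,t)$.

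Next I would argue by contradiction, assuming $h(x)\neq x$ for some $x\in(s,t)$. The key observation is that monotonicity of $h$ forces the orbit $\{h^n(x)\}_{n\ge 0}$ to be strictly monotone. Concretely, if $h(x)>x$, then applying the increasing map $h$ to both sides gives $h^2(x)>h(x)$, and inductively $h^{n+1}(x)>h^n(x)$ for all $n\ge 0$. Hence $h^n(x)\neq x$ for every $n\ge 1$, contradicting the hypothesis that $x$ is a periodic point. The case $h(x)<x$ is handled identically, with a strictly decreasing orbit. This forces $h(x)=x$ for every $x\in(s,t)$, and combined with the boundary fixed points we conclude $h=id$ on $[s,t]$.

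I do not foresee any real obstacle; the only thing to be slightly careful about is that different points of $[s,t]$ may have different periods under $h$, but the argument above uses only the single point $x$ and its own orbit, so no uniform period is needed.
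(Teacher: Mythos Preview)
Your proposal is correct and follows essentially the same monotonicity-by-contradiction argument as the paper: assume some $x$ is not fixed, use that $h$ is increasing to show the orbit $\{h^n(x)\}$ is strictly monotone, and contradict periodicity of $x$. The only cosmetic difference is that you separately note the endpoints are fixed, whereas the paper simply treats all points at once.
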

\begin{proof} We argue by contradiction and assume that $h\neq id$. Then there exists a point $x\in[s,t]$ such that $h(x)\neq x$. Without loss of generality assume that $h(x)<x$. Then, since $h$ is increasing, one has $h^2(x)<h(x)<x$. Inductively, one has $h^n(x)<x$ for all positive integers $n$, contradicting the assumption that all points of $[s,t]$ are periodic points of $h$. Thus $h=id$.
\end{proof}

\begin{lemma}\label{2.2}
Let $\gamma:[0,1]\to \mathbb{C}$ be a Jordan arc and let $h:\gamma[0,1]\to \gamma[0,1]$ be a homeomorphism fixing $\gamma(0)$ and $\gamma(1)$. If all points of $\gamma[0,1]$ are periodic points of $h$, then $h=id$.
\end{lemma}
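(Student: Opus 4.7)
The plan is to reduce the statement to Lemma \ref{2.1} by transporting everything back to the parameter interval $[0,1]$ via the homeomorphism $\gamma$. Set $\widetilde{h}:=\gamma^{-1}\circ h\circ\gamma:[0,1]\to[0,1]$. Since $\gamma$ is a topological embedding, $\widetilde{h}$ is a self-homeomorphism of $[0,1]$, and the hypothesis $h(\gamma(0))=\gamma(0)$, $h(\gamma(1))=\gamma(1)$ gives $\widetilde{h}(0)=0$ and $\widetilde{h}(1)=1$.

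Next I would observe that any self-homeomorphism of $[0,1]$ is strictly monotone, and the only one fixing both endpoints is strictly increasing (a decreasing homeomorphism would swap $0$ and $1$). Hence $\widetilde{h}$ is an increasing homeomorphism of $[0,1]$. Moreover, conjugation by $\gamma$ preserves iteration: $\widetilde{h}^n=\gamma^{-1}\circ h^n\circ\gamma$ for every $n\geq 1$. Consequently, if every point of $\gamma[0,1]$ is periodic for $h$, then for each $x\in[0,1]$ there is some $n\geq 1$ with $h^n(\gamma(x))=\gamma(x)$, which translates to $\widetilde{h}^n(x)=x$. Thus every point of $[0,1]$ is periodic for $\widetilde{h}$.

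Applying Lemma \ref{2.1} to $\widetilde{h}$ then yields $\widetilde{h}=id$ on $[0,1]$, and conjugating back gives $h=id$ on $\gamma[0,1]$, which is the desired conclusion. There is no real obstacle here; the only point requiring care is the monotonicity argument identifying $\widetilde{h}$ as an increasing (rather than decreasing) homeomorphism, which is automatic once we use that both endpoints are fixed.
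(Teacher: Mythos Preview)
Your proof is correct and follows exactly the same approach as the paper: conjugate by $\gamma$ to obtain an increasing self-homeomorphism of $[0,1]$ with all points periodic, then apply Lemma \ref{2.1}. The paper's version is terser but identical in substance.
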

\begin{proof} By the assumption, the map $\gamma^{-1}h\gamma:[0,1]\to[0,1]$ is an increasing homeomorphism. By Lemma \ref{2.1}, $\gamma^{-1}h\gamma=id$ on $[0,1]$, so $h=id$ on $\gamma[0,1]$.
\end{proof}

\begin{lemma}\label{2.3}
Let $\gamma$ be a Jordan closed path in $\mathbb{C}$ and $h$ be an orientation-preserving self-homeomorphism of $\gamma$ with a fixed point. If all points of $\gamma$ are periodic points of $h$, then $h=id$.
\end{lemma}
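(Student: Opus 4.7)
The plan is to conjugate $h$ to an increasing self-homeomorphism of $[0,1]$ by cutting $\gamma$ at the fixed point, and then apply Lemma \ref{2.1} directly.

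Let $p$ be a fixed point of $h$, and fix a topological parameterization $\gamma:[0,1]\to \gamma$ with $\gamma(0)=\gamma(1)=p$, injective on $[0,1)$, and respecting the clockwise cyclic order on $\gamma$. I would define $g:[0,1]\to[0,1]$ by $g(t)=\gamma^{-1}(h(\gamma(t)))$ for $t\in (0,1)$ (the unique preimage in $(0,1)$, well-defined since $h$ is injective and $h(p)=p$), together with $g(0)=0$ and $g(1)=1$, and then show that $g$ is an increasing self-homeomorphism of $[0,1]$.

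The verification uses the orientation-preserving hypothesis in two ways. For monotonicity: if $0<t_1<t_2<1$, the triple $(p,\gamma(t_1),\gamma(t_2))$ is in clockwise order on $\gamma$, so its $h$-image is too, which after applying $\gamma^{-1}$ translates to $g(t_1)<g(t_2)$. For continuity at the endpoints: a priori as $t\to 0^+$ the value $g(t)$ could accumulate at either $0$ or $1$; applying the orientation-preserving property to triples of the form $(p,\gamma(\epsilon),\gamma(1-\epsilon))$ forces $h(\gamma(\epsilon))$ to lie on the same local side of $p$ as $\gamma(\epsilon)$, so $g(t)\to 0$, and symmetrically $g(t)\to 1$ as $t\to 1^-$.

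Once $g$ is known to be an increasing self-homeomorphism of $[0,1]$, the conclusion is immediate: every interior point is periodic under $g$ by conjugation with $h$, the endpoints $0$ and $1$ are fixed hence periodic, and Lemma \ref{2.1} yields $g=id$ on $[0,1]$, whence $h=id$ on $\gamma$. I expect the only real obstacle to be the careful translation of the global orientation-preserving condition into the local claims needed for endpoint continuity and monotonicity of $g$; once that bookkeeping is handled, Lemma \ref{2.1} closes the argument at once.
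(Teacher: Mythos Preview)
Your proposal is correct and is essentially the same idea as the paper's proof: both cut the circle at the fixed point and exploit the monotone structure induced by orientation-preservation together with periodicity. The paper phrases this directly via nested subarcs $\gamma_{z_0 z}\subset\gamma_{z_0 h(z)}\subset\cdots$ (inlining the argument of Lemma~\ref{2.1}) rather than explicitly conjugating to $[0,1]$ and invoking Lemma~\ref{2.1}, but the content is identical.
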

\begin{proof} Let $z_0\in \gamma$ be a fixed point of $h$. For every $z\in \gamma\setminus \{z_0\}$ one has $h(z)\neq z_0$. Denote by $\gamma_{z_0z}$ the proper subarc of $\gamma$ from $z_0$ to $z$ clockwise. By the assumptions, we have $h(\gamma_{z_0z})=\gamma_{z_0h(z)}$, and hence $\gamma_{z_0h(z)}$ is a proper subarc of $\gamma$. Moreover, it holds either $\gamma_{z_0z}\subset\gamma_{z_0h(z)}$ or $\gamma_{z_0z}\supset\gamma_{z_0h(z)}$. Without loss of generality assume $\gamma_{z_0z}\subset\gamma_{z_0h(z)}$. Then $\gamma_{z_0h^n(z)}$ is a proper subarc of $\gamma$ and $$\gamma_{z_0z}\subset\gamma_{z_0h(z)}\subset\cdots\subset\gamma_{z_0h^n(z)}$$ for all integers $n\geq 1$. On the other hand, since $z$ is a periodic point of $h$, one has $\gamma_{z_0z}=\gamma_{z_0h^n(z)}$ for some integer $n\geq 1$. It then follows that $\gamma_{z_0z}=\gamma_{z_0h(z)}$, so $h(z)=z$. Thus $h=id$.
\end{proof}

The following result can be found in Ker\'{e}tj\'{a}rt\'{o} \cite{K}.

\begin{lemma}\label{2.4}
Let $G_1, G_2$ be Jordan domains in $\mathbb{C}$ with $G_1\cap G_2\neq\emptyset$. Then every component (maximal connected subset) of  $G_1\cap G_2$ is a Jordan domain.
\end{lemma}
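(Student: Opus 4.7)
The plan is to verify the defining properties of a Jordan domain for each component $W$ of $G_1\cap G_2$: that $W$ is open, bounded, and simply connected, and that $\partial W$ is a Jordan closed path.

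Since $G_1\cap G_2$ is open, its connected components are open, so $W$ is automatically a bounded open connected subset of $\mathbb{C}$. To see that $W$ is simply connected, I would take any Jordan closed path $\eta\subset W$, let $\Omega$ denote its bounded complementary component, and show $\Omega\subset W$. If $\Omega$ contained a point $p\in\partial G_1$, then since $G_1$ is a bounded simply connected domain its complement $\mathbb{C}\setminus G_1$ is connected, so a path from $p$ to $\infty$ inside $\mathbb{C}\setminus G_1$ would necessarily cross $\eta\subset G_1$, a contradiction. Hence $\Omega\cap\partial G_1=\emptyset$, and since $\Omega\cup\eta$ is connected, meets $G_1$, and avoids $\partial G_1$, we get $\Omega\subset G_1$. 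The symmetric argument gives $\Omega\subset G_2$, so $\Omega\subset G_1\cap G_2$; being connected and meeting $W$, $\Omega\subset W$, proving simple connectedness.

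For the boundary, note that $\partial W\subset\partial G_1\cup\partial G_2$ is compact and (by a standard argument for bounded simply connected planar domains) connected. I would then apply the Riemann mapping theorem to obtain a conformal map $\varphi:\mathbb{D}\to W$ and invoke Carath\'eodory's extension theorem, which says that $\varphi$ extends to a homeomorphism $\overline{\mathbb{D}}\to\overline{W}$, exhibiting $\partial W$ as a Jordan closed path, exactly when $\partial W$ is locally connected and has no cut points. Thus the task reduces to verifying these two topological properties of $\partial W$.

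The main obstacle is that $\partial G_1\cap\partial G_2$ can a priori be a topologically complicated closed set (for example, a Cantor set), so $\partial W$ cannot in general be decomposed into finitely many arcs. The plan is to parameterize $\partial W$ directly: travel along subarcs of $\partial G_1$ lying in $\overline{W}$, switching to subarcs of $\partial G_2$ at each point of $\partial G_1\cap\partial G_2\cap\partial W$ where the traversal leaves $\overline{W}$, and verify that the procedure produces a continuous injective map $S^1\to\partial W$. Equivalently, one checks local connectedness of $\partial W$ and absence of cut points by examining small disks $B(p,r)$ around each boundary point $p$: at points $p\in\partial W\cap(\partial G_i\setminus\partial G_{3-i})$ this is immediate from the local arc structure of $\partial G_i$, while at $p\in\partial G_1\cap\partial G_2\cap\partial W$ one must analyze how $W$ sits among the components of $B(p,r)\setminus(\partial G_1\cup\partial G_2)$. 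Controlling this local behavior -- in particular ruling out wild accumulation of oscillating arcs at points of $\partial G_1\cap\partial G_2$ -- is the delicate step, and it is the heart of the argument in \cite{K}.
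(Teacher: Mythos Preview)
The paper does not actually prove this lemma; it simply records it as a result that ``can be found in Ker\'{e}tj\'{a}rt\'{o} \cite{K}.'' Your proposal is therefore more than the paper offers: you supply the preliminary reductions (openness, boundedness, and simple connectedness of $W$, and the containment $\partial W\subset\partial G_1\cup\partial G_2$), reduce via the Riemann map and Carath\'eodory to showing $\partial W$ is a simple closed curve, and then---correctly---identify the local analysis at points of $\partial G_1\cap\partial G_2$ as the genuinely delicate step, which you too attribute to \cite{K}. So both the paper and your proposal ultimately rest on the same citation; you have just made the surrounding scaffolding explicit. One small imprecision: the clean form of Carath\'eodory's theorem is that the Riemann map extends to a homeomorphism $\overline{\mathbb{D}}\to\overline{W}$ if and only if $\partial W$ is a Jordan curve; ``locally connected with no cut points'' is not by itself a characterization of simple closed curves without an additional hypothesis (such as one-dimensionality, which you do have here since $\partial W\subset\partial G_1\cup\partial G_2$), so you should state precisely which characterization you are invoking.
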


The following topological rigidity result will play an important role in the proof of Theorem \ref{c1}.

\begin{lemma}\label{l1}
Let $G$ be a Jordan closed domain in $\mathbb{C}$. If $f:G\to G$ is a periodic orientation-preserving homeomorphism that has a fixed point in $\partial G$ then $f$ is the identity on $G$.
\end{lemma}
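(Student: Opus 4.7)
The plan is to peel the problem from the boundary inward. Since $f\colon G\to G$ is a homeomorphism, one has $f(\partial G)=\partial G$, so $f|_{\partial G}$ is a periodic orientation-preserving self-homeomorphism of the Jordan closed path $\partial G$; every point of $\partial G$ is periodic because $f^k=\mathrm{id}$, and by hypothesis there is a fixed point of $f$ on $\partial G$. Lemma \ref{2.3} therefore yields $f|_{\partial G}=\mathrm{id}$ immediately.

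Next I would reduce to the standard model via the Jordan-Schoenflies theorem: pick a homeomorphism $\Phi\colon G\to\overline{\mathbb{D}}$ carrying $\partial G$ onto the unit circle $S^1$, and set $\widetilde f:=\Phi f\Phi^{-1}$. Then $\widetilde f$ is a periodic orientation-preserving self-homeomorphism of $\overline{\mathbb{D}}$ fixing $S^1$ pointwise, and it suffices to prove $\widetilde f=\mathrm{id}$. For this I would invoke the classical Brouwer-Ker\'etj\'art\'o classification: every periodic orientation-preserving self-homeomorphism of $\overline{\mathbb{D}}$ is topologically conjugate to a Euclidean rotation $R_\theta$ about the origin. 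If $h\widetilde f h^{-1}=R_\theta$, then $h$ sends $\mathrm{Fix}(\widetilde f)$ bijectively onto $\mathrm{Fix}(R_\theta)$. Because $\mathrm{Fix}(\widetilde f)\supseteq S^1$ is uncountable while $\mathrm{Fix}(R_\theta)=\{0\}$ whenever $\theta\not\equiv 0\pmod{2\pi}$, we must have $\theta\equiv 0$, i.e., $R_\theta=\mathrm{id}$; hence $\widetilde f=\mathrm{id}$ and $f=\mathrm{id}$.

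The principal obstacle is that the Brouwer-Ker\'etj\'art\'o classification is not among the tools built up in the paper. A self-contained alternative within the paper's own toolkit would proceed by contradiction: assume $\widetilde f(z_0)\neq z_0$ for some $z_0\in\mathrm{int}(\mathbb{D})$, pick distinct $p,q\in S^1$ and a Jordan arc $\gamma$ from $p$ to $q$ through $z_0$ with $\gamma\cap S^1=\{p,q\}$, which cuts $\overline{\mathbb{D}}$ into two closed Jordan subdomains $D^+$ and $D^-$. Because $\widetilde f$ is orientation-preserving and fixes $S^1$ pointwise, $\widetilde f(D^+)$ is bounded by $\widetilde f(\gamma)$ together with the same subarc of $S^1$ that lies in $\partial D^+$. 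If $\gamma$ can be chosen so that $\gamma\cap\widetilde f(\gamma)=\{p,q\}$, then Lemma \ref{2.4}, applied to $\mathrm{int}(D^+)$ and $\mathrm{int}(\widetilde f(D^+))$, forces $\widetilde f(D^+)\subsetneq D^+$; iterating produces the forbidden chain $D^+\supsetneq\widetilde f(D^+)\supsetneq\cdots\supsetneq\widetilde f^{\,k}(D^+)=D^+$. The delicate point of this elementary route is guaranteeing the generic intersection condition $\gamma\cap\widetilde f(\gamma)=\{p,q\}$ (or handling its failure by iterating Lemma \ref{2.4} on the Jordan-domain components of the intersections), and it is precisely this book-keeping that makes the classification-based argument cleaner.
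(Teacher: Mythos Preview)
Your primary argument via the Brouwer--Ker\'ekj\'art\'o classification is correct, but it takes a genuinely different route from the paper. After the common first step ($f|_{\partial G}=\mathrm{id}$ via Lemma~\ref{2.3}), the paper stays entirely within its own toolkit: it fixes two boundary points $u,v$ and a subarc $\alpha\subset\partial G$ between them, and for an arbitrary open Jordan arc $\gamma\subset\mathrm{int}(G)$ joining $u,v$ it forms the Jordan domain $D$ bounded by $\alpha\cup\gamma$, then sets $A=D\cap f(D)\cap\cdots\cap f^{k-1}(D)$. The component $B$ of $A$ whose boundary contains $\alpha$ is a Jordan domain by Lemma~\ref{2.4}, and because $f$ permutes the factors cyclically one has $f(A)=A$, hence $f(B)=B$ and $f(\beta)=\beta$ for $\beta=\partial B\setminus\alpha$. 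Lemma~\ref{2.2} gives $f=\mathrm{id}$ on $\beta$, and then a short set-theoretic argument forces $\beta=\gamma$, so $f=\mathrm{id}$ on every such cross-cut and hence on $G$.

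This is precisely the ``book-keeping'' you flagged as delicate in your alternative sketch: rather than trying to arrange $\gamma\cap f(\gamma)=\{p,q\}$ generically, the paper intersects \emph{all} iterates $D,f(D),\dots,f^{k-1}(D)$ at once to manufacture an $f$-invariant Jordan subdomain directly, sidestepping any transversality hypothesis. Your classification-based route is shorter and conceptually clean, at the cost of importing a nontrivial theorem; the paper's route is longer but self-contained within Lemmas~\ref{2.2}--\ref{2.4}.
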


\begin{proof}
By the assumptions, $\partial G$ is a Jordan closed path in $\mathbb{C}$ and the restriction $h|_{\partial G}$ is an orientation-preserving self homeomorphism of $\partial G$ with a fixed point, and all points of $\partial G$ are periodic points of $h|_{\partial G}$. Thus $h=id$ on $\partial G$ by Lemma \ref{2.3}.

Fix two distinct points $u,v$ and a proper closed subarc $\alpha$ of endpoints $u,v$ in $\partial G$. To prove $f=id$ on $G$, it suffices to verify $h=id$ on $\gamma$ for every open Jordan arc $\gamma$ in $\mbox{int}(G)$ of endpoints $u,v$.

Let $\gamma$ be such an arc. Then $\alpha\cup \gamma$ is a Jordan closed path. Denote by $D$ the Jordan domain bounded by $\alpha\cup \gamma$ in $G$. Since $h=id$ on $\alpha$, we see that $h^m(D)$ is a Jordan domain bounded by $\alpha\cup h^m(\gamma)$ for every integer $m\geq 0$.
Let
$$
A=D\cap h(D)\cap\cdots\cap h^{k-1}(D).
$$
Then $A$ has a component $B$ whose boundary contains $\alpha$. By Lemma \ref{2.4}, $B$ is a Jordan domain. Let $\beta=(\partial B)\setminus\alpha$. Then $\beta$ is a Jordan arc of endpoints $u,v$ with
\begin{equation}\label{yy}
\beta\subset \gamma\cup h(\gamma)\cup\cdots\cup h^{k-1}(\gamma).
\end{equation}
Since $h$ is $k$-periodic, one has $h(A)=A$ by the definition of $A$, which implies that $h(B)$ is a component of $A$ whose boundary contains $\alpha$. Thus $h(B)=B$,
$h(\partial B)=\partial B$, and $h(\beta)=\beta$. Now, by Lemma \ref{2.2}, we have $h=id$ on $\beta$, which gives
\begin{equation}\label{be}
\beta\cap h^m(\gamma)=h^{-m}(\beta\cap h^m(\gamma))=\beta\cap \gamma
\end{equation}
for every integer $m\geq 0$.

The inclusion (\ref{yy}) and the equality (\ref{be}) verify $\beta=\beta\cap \gamma$, and hence $\beta\subset\gamma$. Since both $\gamma$ and $\beta$ are Jordan arcs in $\mbox{int}(G)$ of endpoints $u$ and $v$, the inclusion $\beta\subset\gamma$ is actually the equality $\beta=\gamma$.

Since $\beta=\gamma$ and $h=id$ on $\beta$, we have $h=id$ on $\gamma$, as desired.
\end{proof}

\noindent{\bf The proof of Theorem \ref{c1}.} Let $S:=D_0\setminus\cup_{k=1}^\infty D_k$ be a quasi-round carpet of measure zero in $\mathbb{C}$ and $f:S\to S$ be a periodic orientation-preserving quasisymmetric homeomorphism that has a fixed point in $\partial D_0$. By Theorem \ref{l2}, $f$ has a periodic quasiconformal extension $F:\mathbb{C}\to \mathbb{C}$. Thus, the restriction $F|_{D_0}$ is a periodic orientation-preserving self homeomorphism of $D_0$ with a fixed point on $\partial D_0$. By Lemma \ref{l1}, $F=id$ on $D_0$, so $f=id$ on $S$. $\hfill\Box$

\section{Rigidity of Square Carpets}

In this section we prove Theorems \ref{t1} and \ref{t1.6}.

\medskip

\noindent{\bf Proof of Theorem \ref{t1}.} Let
$$
R=[0,a]\times [0,1]\ \mbox{ and }\ K=(s,s+w)\times (t,t+h)\ \mbox{ with }\ \overline{K}\subset\mbox{int}(R),
$$
where $a,w,h>0$. Let
$$
S:=(R\setminus K)\setminus\bigcup_{k=1}^\infty Q_k
$$
be a square carpet of measure zero in the rectangle ring $R\setminus K$. Let $f: S\to S$ be a quasisymmetric homeomorphism fixing each of the four vertices of $R$. We first prove that there is an integer $n\geq 1$ such that
\begin{equation}\label{z1}
f^n(\partial K)=\partial K.
\end{equation}

The proof requires the concept of the modulus of a path family. Let $\Gamma$ be a family of rectifiable paths in $\mathbb{C}$. A Borel function $\rho: \mathbb{C}\to [0,\infty]$ is called admissible for $\Gamma$ if $$\int_{\gamma}\rho\ ds\geq 1$$
for each $\gamma\in\Gamma$. The conformal modulus $\mbox{mod}(\Gamma)$ of $\Gamma$ is defined by
$$
\mbox{mod}(\Gamma)=\inf_{\rho}\int\rho^2\ dxdy,
$$
where the infimum is taken over all admissible functions for $\Gamma$. An admissible function $\rho$ is called extremal for $\Gamma$ if
$$
\mbox{mod}(\Gamma)=\int\rho^2\ dxdy.
$$

Now we list key lemmas. According to Bonk \cite[Proposition 5.1]{B}, $f$ has a quasiconformal extension $F: \mathbb{C}\to \mathbb{C}$. Let such an extension $F$ be given.
For each $x\in [0,a]$ let $\gamma_x$ be the vertical segment joining points $(x,0)$ and $(x,1)$. Let
\begin{equation}\label{A}
A=\{x\in[0,a]:\, F\in\mbox{AC}(\gamma_x),\, \mathcal{L}(S\cap\gamma_x)=0\}
\end{equation}
and
$$\Gamma_A:=\{\gamma_x: x\in A\},$$ where $F\in\mbox{AC}(\gamma_x)$ means that $F$ is absolutely continuous on $\gamma_x$ and $\mathcal{L}$ denotes the $1$-dimensional Lebesgue measure.

\begin{lemma}\label{4.1}
$\mathcal{L}(A)=a$.
\end{lemma}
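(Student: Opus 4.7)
The plan is to show that each of the two conditions defining $A$ fails only on a set of $1$-dimensional Lebesgue measure zero in $[0,a]$, so that the complement $[0,a]\setminus A$ has measure zero and $\mathcal{L}(A)=a$ follows.

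First I would handle the absolute-continuity condition. Since $F:\mathbb{C}\to\mathbb{C}$ is a quasiconformal extension, it is ACL (absolutely continuous on lines) in every coordinate direction, a standard property of quasiconformal maps (see V\"ais\"al\"a \cite{V}). In particular, restricted to the rectangle $R=[0,a]\times[0,1]$, the map $F$ is absolutely continuous on the vertical segment $\gamma_x$ for all $x\in[0,a]$ outside some set $E_1$ with $\mathcal{L}(E_1)=0$.

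Next I would handle the measure-zero trace condition. The carpet $S$ has $2$-dimensional Lebesgue measure zero by hypothesis. By Fubini's theorem applied to the indicator $\mathbf{1}_S$ on $R$,
$$
\int_0^a \mathcal{L}(S\cap\gamma_x)\,dx=\mathcal{L}^2(S\cap R)=0,
$$
so $\mathcal{L}(S\cap\gamma_x)=0$ for all $x$ outside some set $E_2\subset[0,a]$ with $\mathcal{L}(E_2)=0$.

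Finally, $[0,a]\setminus A\subset E_1\cup E_2$, so $\mathcal{L}([0,a]\setminus A)=0$ and hence $\mathcal{L}(A)=a$, as claimed. The main subtlety, and the only nontrivial ingredient, is invoking the ACL property for quasiconformal maps; once that is in hand the rest is a one-line application of Fubini.
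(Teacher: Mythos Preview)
Your argument is correct. The paper does not actually supply a proof of this lemma; it simply records that Lemmas~\ref{4.1} and~\ref{4.2+} are adapted from Bonk--Merenkov~\cite{BM1}. The reasoning you give---the ACL property of quasiconformal maps for the first condition and Fubini applied to the null set $S$ for the second---is exactly the standard argument behind that reference, so there is nothing to add.
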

%\begin{proof} By the ACL property of quasiconformal maps (cf. \cite[Theorem 31.2]{V}), we have $F\in\mbox{AC}(\gamma_x)$ for $\mathcal{H}^1$-almost %every $x\in [0,a]$. On the other hand, by Fubini's theorem,
%$$\mathcal{L}^2(S)=\int_0^a\mathcal{H}^1(S\cap\gamma_x)dx.$$
%Since $\mathcal{L}^2(S)=0$,
%we get $\mathcal{H}^1(S\cap\gamma_x)=0$ for $\mathcal{H}^1$-almost every $x\in [0,a]$. It then follows that $\mathcal{H}^1(A)=a$.
%\end{proof}

\begin{lemma}\label{4.2+}
$\mathcal{L}(P(F(\gamma)\setminus S))=1$ for every $\gamma\in \Gamma_A$,
where $P$ denotes the orthogonal projection to the vertical coordinative axis.
\end{lemma}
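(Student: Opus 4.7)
The plan is to establish $\mathcal{L}(P(F(\gamma)\setminus S))=1$ by combining two ingredients: the full image curve $F(\gamma)$ projects onto all of $[0,1]$, while the portion of $F(\gamma)$ landing back in $S$ projects onto a Lebesgue-null subset of $\mathbb{R}$.

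For the first ingredient, I would first verify that $F$ preserves each side of $R$. Peripheral circles of a carpet are pairwise disjoint, so $f(\partial R)$ must equal the unique peripheral circle containing the four fixed vertices, namely $\partial R$; this preservation of the outer peripheral circle is exactly what permits Bonk's extension $F$ to be a self-homeomorphism of $\mathbb{C}$ (fixing $\infty$) rather than merely of $\widehat{\mathbb{C}}$. The restriction $F|_{\partial R}$ is then a self-homeomorphism of the Jordan closed path $\partial R$ fixing four points; an orientation-reversing self-homeomorphism of such a path has at most two fixed points, so $F|_{\partial R}$ is orientation-preserving and therefore maps each side of $R$ onto itself. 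Combined with $F(\infty)=\infty$, this gives $F(R)=R$, so $F(\gamma_x)$ is an arc in $R$ from a point on $[0,a]\times\{0\}$ to a point on $[0,a]\times\{1\}$, and the intermediate value theorem applied to the $y$-coordinate along a parameterization yields $P(F(\gamma_x))=[0,1]$.

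For the second ingredient, since $F$ restricts to a self-bijection of $S$, one has $F(\gamma_x)\cap S=F(\gamma_x\cap S)$. The defining properties of $A$ in (\ref{A}) give $\mathcal{L}(\gamma_x\cap S)=0$ and $F\in\mathrm{AC}(\gamma_x)$; absolute continuity on a rectifiable arc implies Luzin's property (N), so $F(\gamma_x\cap S)$ has one-dimensional Hausdorff measure zero in $\mathbb{C}$. The orthogonal projection $P$ is $1$-Lipschitz, so it cannot increase Hausdorff measure, and $P(F(\gamma_x\cap S))\subset\mathbb{R}$ is Lebesgue-null. Combining via the decomposition $[0,1]=P(F(\gamma_x))=P(F(\gamma_x)\cap S)\cup P(F(\gamma_x)\setminus S)$ yields $\mathcal{L}(P(F(\gamma_x)\setminus S))=1$. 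The only delicate step is the AC-to-Luzin-(N) passage on the image curve; the rest is bookkeeping that hinges on the invariance of $\partial R$.
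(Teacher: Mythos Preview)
Your argument is correct. The paper itself does not supply a proof of this lemma; it simply states that the result is ``adapted from Bonk and Merenkov \cite{BM1}'' and refers the reader to the proof of \cite[Theorem 1.4]{BM1}. What you have written is precisely the argument underlying that reference: the connectivity of $F(\gamma_x)$ in $R$ between the two horizontal sides forces $P(F(\gamma_x))=[0,1]$, and the absolute continuity of $F$ on $\gamma_x$ together with $\mathcal{L}(\gamma_x\cap S)=0$ and Luzin's property~(N) kills the contribution of $P(F(\gamma_x)\cap S)$. Your justification that $F$ preserves each side of $\partial R$ (via the fixed-vertex hypothesis and the fact that an orientation-reversing circle homeomorphism has exactly two fixed points) is a clean way to secure the first step, and the identity $F(\gamma_x)\cap S=F(\gamma_x\cap S)$ follows from $F(S)=S$ as you note. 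There is nothing to add; your write-up could stand in for the omitted proof.
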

%\begin{proof} Let $\gamma\in\Gamma_A$. It is clear that $P(F(\gamma))$ is the unit segment in the imaginary axis. Therefore %$\mathcal{H}^1(P(F(\gamma)))=1$.
%On the other hand, since $\gamma\in\Gamma_A$, one has $\mathcal{H}^1(\gamma\cap S)=0$ and $F\in \mbox{AC}(\gamma)$,
%so $\mathcal{H}^1(F(\gamma)\cap S)=0$, which implies $\mathcal{H}^1(P(F(\gamma)\cap S))=0$. Thus the desired equality holds.
%\end{proof}

\begin{lemma}\label{4.2}
Let $\Gamma_A:=\{\gamma_x: x\in A\}$, where $A$ is defined as (\ref{A}). Then
$$
\mbox{\rm mod}(\Gamma_A)=a.
$$
Moreover, an admissible function $\rho$ is extremal for $\Gamma_A$ if and only if $\rho(z)=1$ for almost all $z$ in the rectangle $R$.
\end{lemma}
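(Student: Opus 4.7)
The plan is a standard length--area computation for the family of vertical segments in the rectangle $R$, relying crucially on Lemma \ref{4.1} to ensure that the subfamily $\Gamma_A$ has the same modulus as the full family $\{\gamma_x : x\in[0,a]\}$. For the upper bound, I would test with $\rho_0 := \chi_R$: each vertical segment $\gamma_x$ with $x\in[0,a]$ has length $1$, so $\int_{\gamma_x}\rho_0\,ds=1$, whence $\rho_0$ is admissible for $\Gamma_A$ and $\mbox{mod}(\Gamma_A)\leq \int\rho_0^2\,dxdy = a$.

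For the lower bound, let $\rho$ be any admissible function for $\Gamma_A$. For each $x\in A$, admissibility combined with the Cauchy--Schwarz inequality yields
$$
1 \;\leq\; \left(\int_0^1 \rho(x,y)\,dy\right)^{2} \;\leq\; \int_0^1 \rho(x,y)^2\,dy.
$$
Integrating over $x\in A$ and invoking $\mathcal{L}(A)=a$ (Lemma \ref{4.1}) together with Fubini's theorem,
$$
\int \rho^2\,dxdy \;\geq\; \int_A \int_0^1 \rho(x,y)^2\,dy\,dx \;\geq\; a,
$$
which gives $\mbox{mod}(\Gamma_A)\geq a$ and hence the claimed equality.

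For the extremal characterization, if an admissible $\rho$ realizes the modulus then every inequality in the chain above must be an equality. Equality in Cauchy--Schwarz for a.e.\ $x\in A$ forces $y\mapsto \rho(x,y)$ to be constant on $[0,1]$, and the admissibility bound pins this constant to $1$; equality between the two outer integrals forces $\rho=0$ a.e.\ outside $A\times[0,1]$, hence a.e.\ outside $R$ (since $\mathcal{L}([0,a]\setminus A)=0$). Combined, $\rho=1$ a.e.\ on $R$. The converse direction is immediate, as $\chi_R$ is admissible with $\int\chi_R^2=a$. The argument is essentially routine; the only substantive input is Lemma \ref{4.1}, and without that full-measure assertion the restriction to the subfamily $\Gamma_A$ could strictly decrease the modulus or alter the extremal.
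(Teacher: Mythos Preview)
Your argument is correct and is precisely the classical length--area computation. Note that the paper does not actually give a proof of this lemma: immediately after stating Lemmas \ref{4.1}, \ref{4.2+}, and \ref{4.2}, the paper simply remarks that ``Lemma \ref{4.2} is well known'' and moves on. So there is nothing to compare against beyond observing that what you wrote is indeed the standard well-known proof the paper is alluding to, with Lemma \ref{4.1} supplying the full-measure input needed to pass from the subfamily $\Gamma_A$ to the full vertical foliation.
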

%\begin{proof} For every admissible function $\rho$ for $\Gamma_A$ one has $a\leq \int\rho^2 dxdy$ by Cauchy's inequality and Lemma \ref{4.1}, which %yields $a\leq\mbox{\rm mod}(\Gamma_A)$. On the other hand, the function $\rho_0$ defined by $\rho_0(z)=1$ for $z\in R$ and $\rho_0(z)=0$ for $z\in %\mathbb{C}\setminus R$ is obviously admissible for $\Gamma_A$, which gives $\mbox{\rm mod}(\Gamma_A)\leq a$.
%\end{proof}
Lemma \ref{4.2} is well known; Lemmas \ref{4.1} and \ref{4.2+} are adapted from Bonk and Merenkov \cite{BM1}; they are applied to the proof of \cite[Theorem 1.4]{BM1}.

Let $\textbf{H}:=\{K\}\cup\{Q_k:k\geq 1\}$ be the set of all holes of the carpet $S$ in the rectangle $R$. Then
\begin{equation}\label{wx}
\{F(Q): Q\in \textbf{H}\}=\textbf{H}.
\end{equation}
For every $Q\in \textbf{H}$ we denote by $l(Q)$ the length of a vertical side of it. Then one has $l(K)=h$.

Since \cite[Theorem 1.4]{BM1} establishes $f=id$ on $S$ for the case $K$ is a square, we only need to prove (\ref{z1}) for the case $w\neq h$.

We argue by contradiction and assume $f^n(\partial K)\neq\partial K$ for all integers $n\geq 1$. This assumption and (\ref{wx}) imply that $F^n(K)$, $n\geq 1$, are different members of $\{Q_k:k\geq 1\}$ and that there exist two distinct positive integers $p, q$ such that $$F(K)=Q_p\ \mbox{ and }\ F(Q_q)=K.$$
Then, since $S$ is a carpet, one has $l(Q_k)\to 0$ as $k\to +\infty$, and hence
\begin{equation}\label{yishi1}
\inf_{n\geq 1} l(F^n(K))=0.
\end{equation}
To obtain a contradiction to (\ref{yishi1}), we are going to show
\begin{equation}\label{yishi}
\min\{w,h\}\leq \inf_{n\geq 1}l(F^n(K)).
\end{equation}
There are two cases.

\medskip

Case 1. $w>h$.

\medskip

We first show $h\leq l(F(K))$.
Let $\rho: \mathbb{C}\to [0,\infty]$ be defined by
$$\rho(z)=\left\{
\begin{array}{llll}
{l(Q_p)}/{h} & \hbox{if $z\in K$} \\ \\
{h}/{l(Q_q)} & \hbox{if $z\in Q_q$} \\ \\
{l(F(Q_k))}/{l(Q_k)} & \hbox{if $z\in Q_k,\ k\geq 1,\ k\neq q$}\\ \\
0 & \hbox{otherwise.}
\end{array}
\right.$$
We claim that $\rho$ is admissible for the path family $\Gamma_A$ in Lemma \ref{4.2}.

In fact, for each $\gamma\in \Gamma_A$ one has $\mathcal{L}(\gamma\cap S)=0$. By this equality, (\ref{wx}), and Lemma \ref{4.2+}, we then get
\begin{eqnarray*}
\int_\gamma\rho\ ds
&=&l(Q_p)\frac{\mathcal{L}(\gamma\cap K)}{h}+h\frac{\mathcal{L}(\gamma\cap Q_q)}{l(Q_q)}+\sum_{k\geq 1,\, k\neq q}l(F(Q_k))\frac{\mathcal{L}(\gamma\cap Q_k)}{l(Q_k)}\\
&=& \sum_{Q\in \textbf{H},\, Q\cap\gamma\neq\emptyset}l(F(Q))\\
&=&\sum_{Q\in \textbf{H},\,F(\gamma)\cap Q\neq\emptyset}l(Q)\\
&\geq& \mathcal{H}^1(P(F(\gamma)\setminus S))=1.
\end{eqnarray*}
This proves that $\rho$ is admissible for $\Gamma_A$.

Since $\mbox{\rm mod}(\Gamma_A)=a$ by Lemma \ref{4.2}, we have
\begin{eqnarray*}
a\leq\int_{R}\rho^2\ dxdy &=& \frac{l(Q_p)^2}{h^2}wh+h^2+\sum_{k\geq 1, k\neq q}l(F(Q_k))^2 \\
&=&\frac{w}{h}l(Q_p)^2+h^2+a-l(Q_p)^2-wh,
\end{eqnarray*}
which implies
$$(\frac{w}{h}-1)h^2\leq(\frac{w}{h}-1)l(Q_p)^2.$$
Since $w>h$ is assumed, we get
$$
h\leq l(Q_p)=l(F(K)).
$$

Now we prove that $h\leq l(F(K))$. Let $n\geq 1$ be an integer. Replacing $F$ with $F^n$ in the above argument, we get
$$
h\leq l(F^n(K)).
$$

\medskip

Case 2. $w<h$.

\medskip

In this case, consider the family of horizontal segments in the rectangle $R$ and let
$$\Gamma:=\{\gamma_y:\, y\in[0,1],\, F\in\mbox{AC}(\gamma_y),\, \mathcal{L}(S\cap\gamma_y)=0\}.$$
One has $\mbox{mod}(\Gamma)=1/a$.
Let $\rho: \mathbb{C}\to [0,\infty]$ be defined by
$$\rho(z)=\left\{
\begin{array}{llll}
{l(Q_p)}/w & \hbox{if $z\in K$} \\ \\
w/{l(Q_q)} & \hbox{if $z\in Q_q$} \\ \\
{l(F(Q_k))}/{l(Q_k)} & \hbox{if $z\in Q_k,\ k\geq 1,\ k\neq q$}\\ \\
0 & \hbox{otherwise.}
\end{array}
\right.$$
We may check that $\rho/a$ is admissible for $\Gamma$. Since $\mbox{\rm mod}(\Gamma)=1/a$, we have
\begin{eqnarray*}
a\leq\int_{R}\rho^2\ dxdy &=& \frac{l(Q_p)^2}{w^2}wh+w^2+\sum_{k\geq 1, k\neq q}l(F(Q_k))^2 \\
&=&\frac{h}{w}l(Q_p)^2+w^2+a-l(Q_p)^2-wh.
\end{eqnarray*}
Now, arguing as we just did in Case 1, we get
$$
w\leq l(F^n(K))
$$
for every integer $n\geq 1$. This proves (\ref{yishi}).

Since (\ref{yishi}) contradicts (\ref{yishi1}), we conclude that (\ref{z1}) is true. Now, by (\ref{z1}) and  Lemma \ref{1st lemma}, we see that $f^n=id$ on $S$ for some integer $n\geq 1$.
Therefore, every homeomorphism $f:S\to S$ in Theorem \ref{t1} is actually a periodic orientation-preserving quasisymmetric homeomorphism with a fixed point in $\partial R$. It then follows from Theorem \ref{c1} that $f=id$ on $S$. $\hfill\Box$

\medskip

The proof of Theorem \ref{t1.6} follows the same structure as Theorem \ref{t1}. The definition of the modulus of a path family in $\mathbb{C}^*$ will be used. Let $$ds=\frac{|dz|}{|z|}\,\mbox{ and }\, d\sigma=\frac{dxdy}{|z|^2}$$ be the line element and the area element of $\mathbb{C}^*$. Let $\Gamma$ be a family of rectifiable paths in $\mathbb{C}^*$. We say that a Borel function $\rho: \mathbb{C}^*\to [0,\infty]$ is admissible for $\Gamma$ if $$\int_{\gamma}\rho(z)\ \frac{|dz|}{|z|}\geq 1$$
for each $\gamma\in\Gamma$. The modulus  of $\Gamma$ is defined by
\begin{equation}\label{md}
\mbox{mod}(\Gamma)=\inf_{\rho}\int\rho^2\ d\sigma,
\end{equation}
where the infimum is taken over all admissible functions for $\Gamma$.

\medskip

\noindent{\bf Proof of Theorem \ref{t1.6}.} Let $A:=\{1\leq|z|\leq r\}$ be a finite $\mathbb{C}^*$-cylinder and $$K=\{te^{i\theta}: a<t<b,\ \alpha<\theta<\beta\}$$
be a $\mathbb{C}^*$-rectangle with $1<a<b<r$ and $0<\beta-\alpha<2\pi$. Let $$h=\log\frac{b}{a}\ \mbox{ and }\ w=\beta-\alpha,$$ which are respectively the radial and circular directional side-lengths of $K$. Let
$$
S:=(A\setminus K)\setminus\bigcup_{k=1}^\infty Q_k
$$
be a $\mathbb{C}^*$-square carpet of measure zero in $A\setminus K$.
Let $f:S\to S$ be an orientation-preserving quasisymmetric homeomorphism that maps the inner and outer peripheral circles of $S$ onto themselves, respectively. We are going to show $f^n=id$ on $S$ for some integer $n\geq1$.

According to Bonk \cite[Proposition 5.1]{B}, $f$ has a quasiconformal extension $F: \mathbb{C}\to \mathbb{C}$.
Let $\textbf{H}:=\{K\}\cup\{Q_k:k\geq 1\}$ be the set of all holes of the carpet $S$ in the finite $\mathbb{C}^*$-cylinder $A$. Then
\begin{equation}\label{wx}
\{F(Q): Q\in \textbf{H}\}=\textbf{H}.
\end{equation}
For every $Q\in \textbf{H}$ we denote by $l^*(Q)$ the $\mathbb{C}^*$-length of a radial side of it. Then one has $l^*(K)=h$.
For every $\theta\in[0,2\pi)$ denote by $\gamma_\theta$ the radial segment joining $e^{i\theta}$ and $re^{i\theta}$. Then $\gamma_\theta$ is of $\mathbb{C}^*$-length $\log r$. Let
$$
\Gamma=\{\gamma_\theta:\, \theta\in[0,2\pi),\, F\in\mbox{AC}(\gamma_\theta),\, \mathcal{L}(S\cap \gamma_\theta)=0\}.
$$
By the definition (\ref{md}), we easily get
\begin{equation}\label{mof}
\mbox{\rm mod}(\Gamma)=\frac{2\pi}{\log r}.
\end{equation}
We claim that there is an integer $n\geq 1$ such that
\begin{equation}\label{zx}
f^n(\partial K)=\partial K.
\end{equation}
Since \cite[Theorem 1.5]{BM1} verifies (\ref{zx}) in the case $K$ is a $\mathbb{C}^*$-square.
We only need to prove it for the case $w\neq h$. We argue by contradiction and assume $f^n(\partial K)\neq\partial K$ for all integers $n\geq 1$. Then $F^n(K)$, $n\geq 1\}$ are different members of $\{Q_k:k\geq 1\}$ and there exist two distinct positive integers $p, q$ such that $$F(K)=Q_p\ \mbox{ and }\ F(Q_q)=K.$$ Since $S$ is a carpet, one has $l^*(Q_k)\to 0$ as $k\to +\infty$. Therefore
\begin{equation}\label{yishi1*}
\inf_{k\geq 1} l^*(F^k(K))=0.
\end{equation}
Next we show
\begin{equation}\label{yishi*}
\min\{w,h\}\leq \inf_{k\geq 1}l^*(F^k(K)).
\end{equation}
There are two cases.

\medskip

Case 1. $w>h$.

\medskip

We first show $h\leq l^*(F(K))$.
Let $\rho: \mathbb{C}^*\to [0,\infty]$ be defined by
$$\rho(z)=\left\{
\begin{array}{llll}
{l^*(Q_p)}/h& \hbox{if $z\in K$} \\ \\
h/{l^*(Q_q)} & \hbox{if $z\in Q_q$} \\ \\
{l^*(F(Q_k))}/{l^*(Q_k)} & \hbox{if $z\in Q_k,\ k\geq 1,\ k\neq q$}\\ \\
0 & \hbox{otherwise.}
\end{array}
\right.$$
We easily check that $\rho/\log r$ is admissible for the path family $\Gamma$. By (\ref{mof}) we have
$$2\pi\log r\leq\int_A\rho^2\ d\sigma=\frac{l^*(Q_p)^2}{h^2}wh+h^2+\sum_{k\geq 1, k\neq q}l^*(F(Q_k))^2,$$
which together with $\sigma(A)=2\pi\log r$, $\sigma(Q_p)=l^*(Q_p)^2$, and $\sigma(K)=wh$ implies
$$2\pi\log r\leq
\frac{w}{h}l^*(Q_p)^2+h^2+2\pi\log r-l^*(Q_p)^2-wh,$$
and hence
$$(\frac{w}{h}-1)h^2\leq(\frac{w}{h}-1)l^*(Q_p)^2.$$
Since $w>h$ is assumed, we thus get
$$
h\leq l^*(Q_p)=l^*(F(K)).
$$
Similarly, we have $h\leq l^*(F^n(K))$ for all integers $n\geq 1$.

\medskip

Case 2. $w<h$.

\medskip

Let $\vartheta_t$ denote the circle $|z|=t$ and let $$\Gamma=\{\vartheta_t:\, t\in[1,r],\, F\in\mbox{AC}(\vartheta_t),\, \mathcal{L}(S\cap \vartheta_t)=0\}.$$ One has $$\mbox{mod}(\Gamma)=\frac{\log r}{2\pi}.$$ By constructing a suitable admissible function for $\Gamma$ and arguing as we did in Case 1, we get
$w\leq l^*(F^n(K))$ for all integers $n\geq 1$. This proves (\ref{yishi*}).

It is clear that (\ref{yishi*}) contradicts (\ref{yishi1*}), and hence (\ref{zx}) is ture. Now, by (\ref{zx}) and Lemma \ref{2cd lemma}, we see that $f^n=id$ on $S$ for some integer $n\geq 1$. This completes the proof.  $\hfill\Box$

%\noindent{\bf Acknowledgement.}

\end{document}